\newtheorem{thm}{Theorem}[section]
\newtheorem{lemma}[thm]{Lemma}
\newtheorem{prop}[thm]{Proposition}
\newtheorem{cor}[thm]{Corollary}
\theoremstyle{definition}
\newtheorem{defn}[thm]{Definition}
\newtheorem{ex}[thm]{Example}
\newtheorem*{rmk}{Remark}
\newcommand{\Z}{{\mathbb{Z}}}
\newcommand{\Out}{{\rm Out}}
\newcommand{\Inn}{{\rm Inn}}
\newcommand{\Aut}{{\rm Aut}}
\newcommand{\IA}{{\rm IA}}
\newcommand{\colim}{{{\rm colim}\hspace{.2em}}}
\newcommand{\gr}{{{\rm gr}}}
\newcommand{\gw}{{{\gamma_{\omega}}}}
\def\ds{\displaystyle}
\numberwithin{equation}{section} 
\title{On monodromy representations in Denham-Suciu fibrations}
\author{Mentor Stafa}
\address{Department of Mathematics, Tulane University, New Orleans, LA 70118}
\email{mstafa@tulane.edu}
\subjclass[2010]{Primary 55U10; 58K10; 13F55; 14F45.}
\keywords{polyhedral product, monodromy representation, automorphisms of a free group}
\thanks{The author was supported by DARPA grant number N66001-11-1-4132}
\begin{document}

\begin{abstract}
We study the monodromy representation corresponding to a fibration introduced by G. Denham and A. Suciu \cite{denham}, which involves polyhedral products given in Definition \ref{defn: polyhedral product}. 
Algebraic and geometric descriptions for these monodromy representations are given.
In particular, we study the case of a product of two finite cyclic groups and obtain representations into $\Out(F_n)$ and $SL_n(\Z)$. We give algebraic descriptions of monodromy for the case of a product of any two finite groups . Finally we give a geometric description for monodromy representations of a product of 2 or more finite groups to $\Out(F_n)$, as well as some algebraic properties. {The geometric description does not rely on choosing a basis for the fundamental group of the fibre in terms of commutators, hence avoids this delicate question.}
\end{abstract}

\maketitle

\tableofcontents

\section{Introduction}

\

Let $(X,A)$ be a pair of spaces and let $K$ be a simplicial complex with $n$ vertices. A new topological space can be constructed using the pair $(X,A)$ and $K$, called a \textit{polyhedral product} and denoted by $Z_K(X,A)\subset X^n$ (see Definition \ref{defn: polyhedral product}). Polyhedral products are actively studied and stand at the foundations of the field of \textit{toric topology}, see for example work of {A. Bahri}, M. Bendersky, F. Cohen and S. Gitler \cite{cohen.macs} for an introduction, or work of V. Buchstaber and T. Panov \cite{buch.panov.toric.topology} for a survey of toric topology. A short introduction on polyhedral products is given in Section \ref{sec: polyhedral products}.

\

Given a locally trivial fibration $f:E\to B$ with fibre $F$, there is an action of the fundamental group of $B$ on the fundamental group of the fibre $F$ and consequently the first homology of $F$. This action gives rise to a  representation called the monodromy representation. One natural use of this representation is calculating the homology of the total space $E$ {using the Serre spectral sequence, when the fundamental group of the base space $B$ acts non-trivially on the homology of F. In that case the homology of $F$ is a non-trivial module over the group ring $\Z \pi_1(B)$.}

\

{ Let $G$ be a topological group. Let $BG$ denote the classifying space of $G$ and $EG$ denote a contractible space on which $G$ acts freely and properly discontinuously. The projection $EG \to BG=EG/G$ is then a bundle projection. In particular, if $G$ is a finite discrete group, then $EG$ is the universal cover of $BG$. 
G. Denham and A. Suciu \cite[Lemma 2.3.2]{denham} gave a natural fibration relating the polyhedral product for the pair $(BG,\ast)$, where $\ast$ is the basepoint of $BG$, and the polyhedral product for $(EG,G)$. That is, for a simplicial complex $K$ with $n$ vertices  the polyhedral product $Z_K(BG,\ast)$ fibres over the product $(BG)^n$ as follows
\begin{equation}\label{D-S-fibration INTRO}
Z_K(EG,G) \to (EG)^n \times_{G^n} Z_K(EG,G) \to (BG)^n ,
\end{equation}
where $(EG)^n \times_{G^n} Z_K(EG,G)\simeq Z_K(BG,\ast)$. The group $G$ acts on the space $Z_K(EG,G)\subset (EG)^n$ coordinate--wise, thus there is an action of $G^n$ on the fibre $Z_K(EG,G)$. This fibration generalizes previous constructions in work of M. Davis and T. Januszkiewicz \cite{davis.januszckiewicz} and work of V. Buchstaber and T. Panov \cite{buch.panov}. In particular, it originates from the Davis-Januszkiewicz space defined by
$$\mathcal{DJ}(K)= E(S^1)^n \times_{(S^1)^n}Z_K(D^2,S^1),$$
and generalizes the result of V. Buchstaber and T. Panov that the homotopy fibre of the inclusion 
$$Z_K(BS^1,\ast) \hookrightarrow (BS^1)^n$$ 
is homotopy equivalent to $Z_K(ES^1,S^1)$.

\
}

This paper studies the monodromy representation of {the natural fibration in equation \ref{D-S-fibration INTRO} introduced by G. Denham and A. Suciu \cite{denham}}, where all the spaces are polyhedral products of special pairs of spaces. The monodromy action is then described naturally using the geometry of the fibre, which arises from properties of polyhedral products. We will use convenient models for the homotopy type of the pairs $(X,A)$ to achieve this. In certain cases we will be able to give precise algebraic descriptions of the action.

\

The fibration in equation \ref{D-S-fibration INTRO} can be slightly generalized if we allow for the group $G$ to vary in each coordinate. In a similar way, one can define a polyhedral product for a sequence of pairs $(\underline{X},\underline{A})=\{(X_i,A_i)\}_{i=1}^n$ and a simplicial complex $K$, and denote it by $Z_K(\underline{X},\underline{A})$, see Definition \ref{defn: polyhedral product}. Two such sequences of pairs are $(\underline{BG},\underline{\ast})=\{(BG_i,\ast_i)\}_{i=1}^n$ and $(\underline{EG},\underline{G})=\{(EG_i,G_i)\}_{i=1}^n$. Then there is a bundle 
$$
Z_K(\underline{EG},\underline{G}) \to (\prod_{i=1}^n EG_i) \times_{(\prod_{i=1}^n G_i)} Z_K(\underline{EG},\underline{G}) \to \prod_{i=1}^n BG_i ,
$$
where $(\prod_{i=1}^n EG_i) \times_{(\prod_{i=1}^n G_i)} Z_K(\underline{EG},\underline{G}) \simeq Z_K(\underline{BG},\underline{\ast})$. Therefore, we can rewrite the fibration in (\ref{D-S-fibration INTRO}) as follows
\begin{equation}\label{D-S-fibration INTRO 2}
Z_K(\underline{EG},\underline{G}) \to Z_K(\underline{BG},\underline{\ast}) \to \prod_{i=1}^n BG_i.
\end{equation}
Similarly, $G_1\times \cdots\times G_n $ acts on the fibre $Z_K(\underline{EG},\underline{G})\subset EG_1\times \dots \times EG_n$ coordinate-wise. We will refer to this fibration as the \textit{Denham--Suciu fibration}. 

\

Now suppose that $G_1,\dots , G_n$ are discrete groups. One can study the action of the fundamental group of the base space on the fundamental group of the fibre, namely the action of $G_1\times \cdots\times G_n $ on $\pi_1(Z_K(\underline{EG},\underline{G}))$. A natural question is also determining the structure of the first homology group of $Z_K(\underline{EG},\underline{G})$ as a module over the group ring $\Z [ G_1\times \cdots\times G_n $].

\

There are cases where the fundamental group of the fibre can be given explicitly. In particular, if $K=K_0$ is the zero simplicial complex consisting of only $n$ vertices and no edges, and if $G_1,\dots , G_n$ are finite discrete groups, then it is shown in \cite[Theorem 3.8]{stafa.fund.gp} that $\pi_1(Z_{K_0}(\underline{EG},\underline{G}))\cong F_{N_n}$, the free group of rank $N_n$. The natural number $N_n$ is shown in \cite[Corollary 3.7]{stafa.fund.gp} to be 
\begin{equation}\label{eqn: formula for N_n}
N_n=(n-1)\prod_{i=1}^n m_i - \sum_{i=1}^n (\prod_{j\neq i} m_j)+1,
\end{equation}
where $m_i=|G_i|$. Note that the rank of the free group in this case depends only on the order of the groups $G_i$. Moreover, for the special case of $K_0$ all the spaces in the Denham--Suciu fibration are Eilenberg--Mac Lane spaces, see \cite[Theorem 1.1]{stafa.fund.gp}, therefore there is a short exact sequence of groups
$$
1 \to F_{N_n} \to \pi_1(Z_{K_0}(\underline{BG},\underline{\ast})) \to \prod_{i=1}^n G_i \to 1.
$$
By definition of polyhedral products we have $Z_{K_0}(\underline{BG},\underline{\ast})=BG_1 \vee \cdots \vee BG_n$, see Example \ref{example: polyhedral products}. Hence, we get
$$
1 \to F_{N_n} \to G_1 \ast \cdots \ast G_n \to \prod_{i=1}^n G_i \to 1,
$$
where $G\ast H$ denotes the free product of the groups $G$ and $H$. Note that $F_{N_n}$ is generated by commutators in the free product $G_1 \ast \cdots \ast G_n$. In fact this is what makes monodromy action a delicate question. There is no obvious ``nice'' basis for the free group $F_{N_n}$ in terms of commutators that makes the algebraic computations of the monodromy representation accessible. To avoid this problem, we replace the pairs $(EG_i,G_i)$ with $([0,1],F_i)$, where $F_i$ is a subset of the unit interval $[0,1]$ with the same cardinality as $G_i$, and give the monodromy action geometrically for the general case, see Section \ref{section.monodromy}. This is possible since up to homotopy polyhedral products depend only on the relative homotopy type of the pairs $(X,A)$.

\

For $F_n$ the free group of rank $n$ let $\Aut(F_n)$ and $\Inn(F_n)$ denote the group of automorphisms and the group of inner automorphisms of $F_n$, respectively. Let $\Out(F_n):= \Aut(F_n)/\Inn(F_n)$ denote the group of outer automorphisms of $F_n$. Then the monodromy action for the finite discrete groups $G_1,\dots,G_n$ and the zero simplicial complex $K_0$ is given by the representation
$$
\rho_{K_0}: G_1\times \cdots\times G_n \to \Out(F_{N_n}),
$$ 
see Section \ref{section.monodromy.action}.

\

Similarly, in \cite[Theorem 1.1]{stafa.fund.gp} it is shown that $Z_K(\underline{BG},\underline{\ast})$ is an Eilenberg--Mac Lane space if and only if $K$ is a \textit{flag complex} (see Definition \ref{defn: flag complex}), and if $K$ is a flag complex there is also a representation
$$
\rho_K : G_1 \times \cdots \times G_n  \to \Out(\pi_1(Z_K(\underline{EG},\underline{G}))),$$
see Section \ref{section.monodromy.action}. Note that the zero simplicial complex $K_0$ is a special case of a flag complex. The computations will be restricted to $K_0$, and in the last section we will discuss the representations for other choices of $K$.

\

We obtain the following results regarding monodromy representations.

\begin{prop}\label{proposition: faithful rep of GxH to Out(Fn) INTRO}
Let $G=\{g_1,\dots ,g_m\}$ and $H=\{h_1,\dots,h_n\}$ be two finite discrete groups of order $m$ and $n$, respectively. Then the monodromy action is given by the faithful representation 
$$
\varphi : G\times H \to \Out(F_{k})
$$
where for any $t \in G\times H$, the image $\varphi(t)=\varphi_t$ is given by the following equations 
\begin{equation}\label{eqn: monodromy for G and H INTRO}
\begin{split}
&\varphi_{g_k}([g_i,h_j])={g_k}[g_i,h_j]{g_k}^{-1}=[g_k g_i,h_j][h_j,g_k]  \\ 
&\varphi_{h_k}([g_i,h_j])={h_k}[g_i,h_j]{h_k}^{-1}=[h_k,g_i][g_i,h_k h_j],
\end{split}
\end{equation}
and $k=(m-1)(n-1)$.
\end{prop}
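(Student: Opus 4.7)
The plan is to translate the monodromy representation into the outer action of $G \times H$ on the fibre's fundamental group coming from the short exact sequence associated with the Denham--Suciu fibration, then to read off the claimed formulas on an explicit basis and verify faithfulness. For $K = K_0$ the discrete simplicial complex on two vertices, the asphericity results quoted above yield the extension
\[
1 \longrightarrow F_k \longrightarrow G \ast H \xrightarrow{\;\pi\;} G \times H \longrightarrow 1,
\]
where $F_k$ is the Cartesian subgroup of the free product, free of rank $k=(m-1)(n-1)$. A Schreier-transversal computation (equivalently, the Kurosh subgroup theorem applied to $\pi$) provides a free basis for $F_k$ consisting of the commutators $\{[g_i,h_j] : g_i \in G \setminus \{e\},\ h_j \in H \setminus \{e\}\}$. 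The monodromy representation $\varphi$ is the outer action attached to this extension: choose any set-theoretic lift $\tilde t \in G \ast H$ of $t \in G \times H$ and let $\varphi_t$ be the class of conjugation by $\tilde t$. Any two lifts differ by an element of $F_k$, so $\varphi_t \in \Out(F_k)$ is well-defined.

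Taking $g_k$ and $h_k$ themselves as canonical lifts of the corresponding elements of $G \times H$, a direct expansion in the free product gives
\[
g_k[g_i,h_j]g_k^{-1} = (g_k g_i)\,h_j\,(g_k g_i)^{-1}h_j^{-1}\cdot h_j g_k h_j^{-1} g_k^{-1} = [g_k g_i, h_j]\,[h_j, g_k],
\]
and a symmetric computation produces the identity for $h_k$, recovering \eqref{eqn: monodromy for G and H INTRO}. The action of a general element $(g_k, h_l) \in G \times H$ is obtained by composing $\varphi_{g_k}$ with $\varphi_{h_l}$, and the outer-action formalism above guarantees that this composition depends only on $(g_k,h_l)$ and not on choices of lifts.

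The substantive step is faithfulness. Since $Z(F_k)=1$ for $k\geq 2$, the conjugation map $G\ast H\to\Aut(F_k)$ has kernel exactly the centraliser $C_{G\ast H}(F_k)$, and this centraliser meets $F_k$ trivially; consequently $\ker\varphi = \pi\bigl(C_{G\ast H}(F_k)\bigr)$, and faithfulness reduces to the statement $C_{G\ast H}(F_k)=\{1\}$. I would establish this by letting $G\ast H$ act on its Bass--Serre tree $T$: each basis commutator $[g_i,h_j]$ has cyclically reduced length four in the free product and is therefore hyperbolic in $T$, and since $F_k$ is non-abelian it contains two hyperbolic elements with distinct axes. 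Any element of $G \ast H$ commuting with both must fix $T$ pointwise, hence be trivial. An alternative route, avoiding Bass--Serre theory, is to verify injectivity of the induced linear representation $G \times H \to GL_k(\Z)$ on the abelianisation $F_k^{\mathrm{ab}} \cong \Z^k$: the formulas above exhibit the augmentation-ideal representations of $G$ and of $H$ as summands, and a direct bookkeeping argument on the mixed terms $[g_k g_i,h_l h_j]$ rules out any non-trivial kernel element. The very small case $(m,n)=(2,2)$, in which $k=1$ and $G\ast H$ is the infinite dihedral group, lies outside the Bass--Serre argument (the centraliser of $F_1$ is strictly larger than $F_1$) and must be addressed separately, either by excluding it or by a direct calculation of the action on $F_1 \cong \Z$.
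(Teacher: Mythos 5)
Your derivation of the formulas \eqref{eqn: monodromy for G and H INTRO} follows the paper's route exactly: the same short exact sequence $1\to F_k\to G\ast H\to G\times H\to 1$ from the Denham--Suciu fibration, the same commutator basis $\{[g_i,h_j]:g_i\neq 1,\,h_j\neq 1\}$ (the paper obtains it from the geometric model $Z_{K_0}(\underline{I},\underline{F})$ and a word-rewriting lemma rather than from a Schreier transversal, but the content is identical), and the same direct expansion of $g_k[g_i,h_j]g_k^{-1}$. Where you genuinely diverge is faithfulness, and your version is the stronger one. The paper's entire argument is the observation that $\varphi_{g_k}$ and $\varphi_{h_k}$ move a basis element whenever $g_k\neq 1$, $h_k\neq 1$; this neither rules out a mixed element $(g,h)$ with both coordinates nontrivial acting trivially, nor rules out the conjugation being inner, so it does not by itself establish injectivity into $\Out(F_k)$. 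Your reduction of $\ker\varphi$ to the image of $C_{G\ast H}(F_k)$ handles both issues at once, and the Bass--Serre argument for triviality of that centraliser is sound for $k\geq 2$. One step to tighten: since $C_{G\ast H}(F_k)\cap F_k=Z(F_k)=1$, the centraliser embeds in the finite group $G\times H$, so its elements are torsion, hence elliptic; an elliptic element commuting with a single hyperbolic element of $F_k$ already fixes that element's axis pointwise (its restriction to the axis is a finite-order translation), hence fixes an edge, and edge stabilisers in the Bass--Serre tree of a free product are trivial. You do not actually need two distinct axes.

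The exceptional case you flag is not a mere technicality. For $m=n=2$ one has $k=1$, $\Out(F_1)\cong\Z/2\Z$, and the formulas give $\varphi_g([g,h])=[g,h]^{-1}=\varphi_h([g,h])$, so $(g,h)$ lies in the kernel --- and indeed no faithful homomorphism $\Z/2\Z\times\Z/2\Z\to\Z/2\Z$ can exist. So the proposition as stated fails for $m=n=2$ and that case must be excluded rather than ``addressed separately''; your caution here catches a gap that the paper's one-line faithfulness argument passes over.
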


As a special case we work out the case of two finite cyclic groups explicitly.

\begin{thm}\label{thm: 2 cyclic gps rep}
Let $G_1=C_n$ and $G_2=C_m$ be two finite cyclic groups of order $n$ and $m$, respectively. Then the monodromy action is given by a representation 
$$
C_n \times C_m \to \Out(F_{k}),
$$
where $k=(n-1)(m-1)$, which gives a faithful representation
$$ 
 C_n \times C_m \to SL_k(\Z),
$$
given by equations \ref{monodromy equation 1} and \ref{monodromy equation 2}.
\end{thm}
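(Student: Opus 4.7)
The plan is to obtain the matrix representation by abelianizing the faithful representation $\varphi \colon C_n \times C_m \to \Out(F_k)$ produced by Proposition~\ref{proposition: faithful rep of GxH to Out(Fn) INTRO}. Since $\Inn(F_k)$ acts trivially on $H_1(F_k; \Z) \cong \Z^k$, the natural map $\Out(F_k) \to GL_k(\Z)$ is well-defined, so composing with $\varphi$ yields a candidate representation $C_n \times C_m \to GL_k(\Z)$. Three tasks remain: extract explicit matrix entries (these are equations~\ref{monodromy equation 1} and \ref{monodromy equation 2}), verify that the image lies in $SL_k(\Z)$, and prove faithfulness.

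First I would fix the basis $\{\bar{e}_{ij}\}$ of $H_1(F_k)$ coming from the generators $[g^i, h^j]$ of $F_k$, where $1 \leq i \leq n-1$ and $1 \leq j \leq m-1$. Applying the formulas~\eqref{eqn: monodromy for G and H INTRO}, reducing exponents modulo $n$ and $m$, and using that $[1, h^j] = 1$ abelianizes to zero while $[h^j, g^a]$ abelianizes to $-\bar{e}_{a, j}$, I obtain
\begin{equation*}
\varphi_{g^a}(\bar{e}_{ij}) = \bar{e}_{(a+i) \bmod n,\, j} - \bar{e}_{a,\, j}, \qquad \varphi_{h^b}(\bar{e}_{ij}) = \bar{e}_{i,\, (b+j) \bmod m} - \bar{e}_{i,\, b},
\end{equation*}
with the convention $\bar{e}_{0, \ast} = \bar{e}_{\ast, 0} = 0$. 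Notice that $\varphi_{g^a}$ is block-diagonal in the $j$-index with $m-1$ identical $(n-1) \times (n-1)$ blocks, and symmetrically for $\varphi_{h^b}$.

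To treat determinants and faithfulness in a unified way, I would identify $H_1(F_k)$ with the tensor product $I(C_n) \otimes_{\Z} I(C_m)$ of augmentation ideals via $\bar{e}_{ij} \leftrightarrow (g^i - 1) \otimes (h^j - 1)$. A direct comparison shows that under this identification $\varphi_{g^a}$ acts as left multiplication by $g^a$ on the first tensor factor and trivially on the second, so the single $(n-1) \times (n-1)$ block is precisely the matrix of multiplication by $g^a$ on $I(C_n)$ in the basis $\{g^i - 1\}$. Using the split short exact sequence $0 \to I(C_n) \to \Z[C_n] \to \Z \to 0$, in which $g$ acts by an $n$-cycle on $\Z[C_n]$ and trivially on the quotient, I would compute the block determinant and combine across blocks to land in $SL_k(\Z)$.

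The step I expect to be the main obstacle is faithfulness, since the map $\Out(F_k) \to GL_k(\Z)$ has a large kernel (the IA-automorphisms modulo inner) and faithfulness of $\varphi$ does not automatically descend through abelianization. The tensor product identification resolves this cleanly: $C_n$ acts faithfully on $I(C_n)$ because left multiplication by $g$ has order $n$ on the free $\Z$-module $\Z[C_n]$ and hence on the direct summand $I(C_n)$, and symmetrically for $C_m$ on $I(C_m)$. Since $I(C_n)$ and $I(C_m)$ are torsion-free over $\Z$, the element $(g^a, h^b)$ acts as $1 \otimes 1$ on $I(C_n) \otimes I(C_m)$ only when $g^a = 1$ and $h^b = 1$. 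This yields faithfulness and, combined with the previous steps, completes the proof.
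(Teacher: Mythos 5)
Your route is genuinely different from the paper's and in several respects sharper. The paper writes out the two block matrices of equations \ref{monodromy equation 1} and \ref{monodromy equation 2} by hand, computes the determinant of each block directly, asserts without argument that $[\widetilde{\varphi}_{x_1}]^i[\widetilde{\varphi}_{x_2}]^j$ is the identity only when $i\equiv 0$ and $j\equiv 0$, and verifies that the two matrices commute only in the case $n=m=3$ (claiming this suffices). Your identification of $H_1$ of the fibre with $I(C_n)\otimes_{\Z} I(C_m)$, with $\varphi_{g^a}$ acting as $g^a\otimes 1$ and $\varphi_{h^b}$ as $1\otimes h^b$, buys all of this at once: commutativity of the two families of matrices is automatic, the paper's block matrices drop out of the basis $\{(g^i-1)\otimes(h^j-1)\}$, the determinant reduces to the sign of the regular permutation representation via $0\to I(C_n)\to\Z[C_n]\to\Z\to 0$, and faithfulness becomes a module-theoretic statement rather than an unproved matrix assertion. (One small repair: $I(C_n)$ is a direct summand of $\Z[C_n]$ only as a $\Z$-module, not as a $C_n$-module, so faithfulness of $C_n$ on $I(C_n)$ should be read off from the submodule directly, e.g.\ by comparing coefficients in the group-element basis; the conclusion is still immediate.)

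Two boundary points need attention, and your framework actually exposes both, whereas the paper elides them. First, $\det\bigl(g^a\otimes 1\bigr)=\bigl((-1)^{\,n-\gcd(a,n)}\bigr)^{m-1}$, so for the generator $a=1$ the determinant is $(-1)^{(n-1)(m-1)}$, which equals $-1$ when $n$ and $m$ are both even; the image lies in $SL_k(\Z)$ only when at least one of $n,m$ is odd (the paper's proof silently restricts to exactly this case), so you cannot "combine across blocks to land in $SL_k(\Z)$" unconditionally. Second, $T\otimes S=\mathrm{id}$ on a tensor product of nonzero free $\Z$-modules forces only $T=\epsilon\cdot\mathrm{id}$ and $S=\epsilon\cdot\mathrm{id}$ for a common sign $\epsilon=\pm1$; torsion-freeness alone does not rule out $\epsilon=-1$. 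That loophole occurs precisely when $g^a$ acts as $-\mathrm{id}$ on $I(C_n)$ and $h^b$ as $-\mathrm{id}$ on $I(C_m)$, which happens only for $n=m=2$ --- and there the theorem genuinely fails, since $k=1$ and $C_2\times C_2$ does not embed in $GL_1(\Z)=\{\pm1\}$. For $(n,m)\neq(2,2)$ a one-line check that $g^a$ never acts as $-\mathrm{id}$ on $I(C_n)$ once $n\ge 3$ closes your argument; with that and the parity hypothesis on $n,m$ made explicit, your proof is complete and strictly more rigorous than the one in the paper.
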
\

The method presented in this paper applies to any finite collection of finite discrete groups $G_1,\dots,G_n$. Let $\gr_{\ast}(G)$ denote the Lie algebra associated to the descending central series of the group $G$. Finally, we give some properties of the representation
$$
G_1 \times \cdots \times G_k \to \Out(F_{N_k})
$$
for finite discrete groups.

\begin{lemma}\label{lemma: properties of the reps}
Let $\{G_i\}_{i=1}^n$ be a collection of finite discrete groups and $K_0$ be the $0$--simplicial complex on $n$ vertices. Let $\rho :\prod_{i=1}^n G_i \to  \Out(F_N)$ be the monodromy representation where $F_N$ is isomorphic to the kernel of the projection 
$$p: G_1 \ast \cdots \ast G_n \to \prod_{i=1}^n G_i.$$ 
Then the following properties of $\rho$ hold:
\begin{enumerate}
\item There is a choice of a generating set for $F_N$ that consists of elements of the form
\[f=[g_{i_1},[g_{i_2},[\dots,[g_{i_{k-1}},g_{i_{k}}]\dots]]] \in \Gamma^k(G_1 \ast \cdots \ast G_n)\]
such that $g_{i_j} \in G_{i_j}$, for all $i_j$.
\item For any $g \in G_1 \ast \cdots \ast G_n$, the map $\rho(g) \in \Aut(F_N)$ satisfies $\rho(g)(f)=\Delta \cdot f$, {where} $\Delta \in \Gamma^{k+1}(G_1 \ast \cdots \ast G_n)$. That is, $\Delta$ is trivial in $ \gr_{p}(G_1 \ast \cdots \ast G_n)$ for $p \leq k$.
\end{enumerate}
\end{lemma}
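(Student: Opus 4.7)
The key structural fact is that $F_N$ coincides with the \emph{cartesian subgroup}, i.e., the kernel of the projection $p:G_1\ast\cdots\ast G_n \to G_1\times\cdots\times G_n$. As a normal subgroup of the free product, the cartesian subgroup is generated by the simple mixed commutators $[g_i,g_j]$ with $g_i\in G_i\setminus\{1\}$, $g_j\in G_j\setminus\{1\}$ and $i\ne j$; these already lie in $\Gamma^2(G_1\ast\cdots\ast G_n)$. To obtain a non-normal generating set for $F_N$ one must also include their conjugates by arbitrary words in the free product. I would use the identity $xcx^{-1}=[x,c]\cdot c$ iteratively to rewrite each such conjugate as a product of right-normed iterated commutators of exactly the shape required in~(1): conjugating a $k$-fold right-normed commutator by a letter $g_{i_0}\in G_{i_0}$ produces a $(k+1)$-fold right-normed commutator times the original. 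This exhibits $F_N$ as generated by elements of the form $[g_{i_1},[g_{i_2},[\dots,[g_{i_{k-1}},g_{i_k}]\dots]]]$, each lying in the appropriate $\Gamma^k$. A Hall-basis style selection, together with Nielsen reductions, should then pare this generating set down to a free basis of the correct rank $N_n$ given in equation~\ref{eqn: formula for N_n}.

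For part (2), I would take the lift of $\rho(g)\in\Out(F_N)$ to $\Aut(F_N)$ defined by conjugation in the ambient free product, $\rho(g)(f):=gfg^{-1}$; since $F_N$ is normal this is a well-defined automorphism, and different lifts of the same element of $\prod_i G_i$ differ only by inner automorphisms of $F_N$. Then
\[
\rho(g)(f)=g f g^{-1}=[g,f]\cdot f,
\]
so setting $\Delta:=[g,f]$ finishes the proof once I observe that $g\in\Gamma^1(G_1\ast\cdots\ast G_n)$ and $f\in\Gamma^k(G_1\ast\cdots\ast G_n)$ (the latter by part~(1)), together with the classical inclusion $[\Gamma^p,\Gamma^q]\subseteq\Gamma^{p+q}$ for the lower central series, force $\Delta\in\Gamma^{k+1}$.

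The main obstacle is part~(1): producing iterated-commutator generators is straightforward from the conjugation identity above, but verifying that these can be thinned to a free basis of $F_N$ whose size matches the formula for $N_n$ is the delicate combinatorial point. I would either invoke the Hall basis of the associated graded free Lie algebra on the free product (using that each $G_i$ contributes finitely many ``letters'') or argue inductively on $n$, taking as base case the two-group situation handled in Proposition~\ref{proposition: faithful rep of GxH to Out(Fn) INTRO}, where the basis is simply $\{[g_i,h_j]\}$.
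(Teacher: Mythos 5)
Your proposal is correct, and part~(2) is essentially identical to the paper's argument: lift $\rho(g)$ to conjugation by $g$ in the ambient free product, write $gfg^{-1}=[g,f]\cdot f$, and conclude $\Delta=[g,f]\in[\Gamma^1,\Gamma^k]\subseteq\Gamma^{k+1}$. (Your remark that different lifts differ by inner automorphisms is a point the paper leaves implicit.) For part~(1) you take a genuinely different, purely algebraic route: you start from the fact that the cartesian subgroup is \emph{normally} generated by the simple mixed commutators $[g_i,g_j]$ and then absorb the conjugating words via $xcx^{-1}=[x,c]\cdot c$, whereas the paper starts geometrically, observing from the cubical model $Z_{K_0}(\underline{I},\underline{F})\subset[0,1]^n$ that every loop is described by a commutator of length at most $n$, and then reduces commutators of products to iterated commutators of single letters via the identity $[xy,z]=[x,[y,z]]\cdot[y,z]\cdot[x,z]$ (in practice you would need this same identity to handle conjugation by a word rather than a single letter, so the commutator calculus overlaps substantially). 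Your approach has the advantage of not depending on the geometric model; the paper's has the advantage of bounding the commutator length by $n$ directly. One caution: the final step of your plan --- paring the generating set down to a free basis of rank $N_n$ via a Hall-basis selection and Nielsen reductions --- is the vaguest part of your argument, but it is also unnecessary: the lemma only asserts the existence of a \emph{generating set} of the stated form, not a free basis, and the paper accordingly never addresses minimality. If you do want a basis (as in the two-group case, where $\{[g_i,h_j]\}$ happens to be one), that is a separate and genuinely delicate combinatorial question that neither your sketch nor the paper resolves in general.
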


\

In the last part of the paper we investigate the possible implications of the representation
$$
\rho_{K_0} :\prod_{i=1}^n G_i \to  \Out(F_{N_n}),
$$
corresponding to the zero simplicial complex with $n$ vertices, on the representations
$$
\rho_K :\prod_{i=1}^n G_i \to  \Out(\pi_1(Z_K(\underline{EG},\underline{G}))),
$$
where $K$ is an arbitrary simplicial complex on $n$ vertices. The motivation comes from the fact that there is a homotopy equivalence $Z_K(\underline{EG},\underline{G})\simeq Z_K({[0,1]},\underline{F})$. We are able to reduce the question to the existence of certain commutative diagrams. More precisely the question is reduced to the existence of a map $r:\Out(F_{N_n}) \to \Out(\pi)$, where $\pi=\pi_1(Z_K(\underline{EG},\underline{G}))$, such that the following diagram commutes
\begin{center}
\begin{tikzcd}
{\color{white}{1}} & 
\Out(F_{N_n})  \arrow[dotted]{dd}{r} \\
G_1 \times \cdots \times G_n  \arrow{ur}{\rho_{K_0}}\arrow{dr}[swap]{\rho_{K}} & \\
& \Out(\pi) .
\end{tikzcd}
\end{center}

\

\section{Polyhedral products}\label{sec: polyhedral products}

\

In this section we give a short introduction to polyhedral products. Let $(\underline{X},\underline{A} )$ denote a finite sequence of pointed $CW$--pairs $\{(X_i,A_i)\}_{i=1}^n$. Let $[n]$ denote the set of natural numbers $\{1,2,\dots,n\}$.

\begin{defn}
A \textit{simplicial complex} $K$ on the set $[n]$ is a subset of the power set of ${[n]}$, such that, if $\sigma \in K$ and $\tau \subseteq \sigma$ then $\tau \in K$.
\end{defn}

A simplex $\sigma \in K$ is given by an increasing sequence of integers 
$$\sigma=\{i_1, i_2, \dots ,i_q\},$$
where $1 \leq i_1 < i_2 < \cdots < i_q \leq n$. In particular, the empty set $\varnothing$ is an element of $K$. The \textit{geometric realization} $|K|$ of $K$ is a simplicial complex inside the simplex $\Delta[n-1]$. 

\

Define a functor
$$D: K \longrightarrow CW_{\ast},$$ 
where $CW_{\ast}$ denotes the category of pointed $CW$--complexes. For any $\sigma \in K$ let 

$$\ds D(\sigma)=\prod_{i=1}^n Y_i=Y_1 \times \cdots \times Y_n \subseteq \prod_{i=1}^n X_i,$$
where
$$
  Y_i = \left\{\def\arraystretch{1.2}%
  \begin{array}{@{}c@{\quad}l@{}}
    A_i & : i \notin \sigma ,\\
       X_i & : i \in \sigma .\\
  \end{array}\right.
$$

\begin{defn}\label{defn: polyhedral product}
The \textit{polyhedral product}  $Z_K(\underline{X},\underline{A})$ is the subset of the product $X_1 \times \cdots \times X_n$ given by the colimit
\[ Z_K(\underline{X},\underline{A})= \underset{{\sigma \in K}}{\colim}\hspace{.05in} D(\sigma) = \bigcup_{\sigma \in K} D(\sigma)  \subseteq \prod_{i=1}^n X_i, \]
where the maps are the inclusions and the topology is the subspace topology.
\end{defn}

In other words the polyhedral product is the colimit of the diagram of spaces $D(\sigma)$. The following notations appear throughout the literature and all represent the same polyhedral product: $Z_K(\underline{X},\underline{A})$, $Z_K(X_i,A_i)$, $Z(K;(\underline{X},\underline{A}))$ and $(\underline{X},\underline{A})^K$. 
If the sequence of pairs is constant then we simply write $Z_K({X},{A})$.

\begin{ex}\label{example: polyhedral products}
Assume $K$ is the zero simplicial complex $\{ \{1\},\dots,\{n\} \}$, $X_i=X$ and $A_i$ be the basepoint $\ast \in X$. Then 
$$Z_K(\underline{X},\underline{A})= Z_K(X,\ast)=X \vee \dots \vee X,$$
the $n$--fold wedge sum of the space $X$. On the other hand, if $K$ is the full simplex, then 
$$ Z_K(X,\ast)= X_1 \times \cdots \times X_n.$$
\end{ex}

\begin{ex}
Assume $K=\{ \{1\},\{2\} \}$. Let $(\underline{X},\underline{A})=(D^n,S^{n-1})$, the pair consisting of an $n$--disk and the bounding $(n-1)$--sphere. Then 
\[Z_K(\underline{X},\underline{A})= Z_K(D^n,S^{n-1}) =D^n \times S^{n-1} \cup S^{n-1}\times D^n=\partial D^{2n}=S^{2n-1}.\]
\end{ex}

\begin{defn}\label{defn: graph product}
Given a simplicial graph $\Gamma$ with vertex set $S$ and a family of groups $\{G_s\}_{s\in S}$, their \textit{graph product} 
\[\ds \prod_{\Gamma} G_s \]
is the quotient of the free product of the groups $G_s$ by the relations that elements of $G_s$ and $G_t$ commute whenever $\{s, t\}$ is an edge of $\Gamma$. Note that if $\Gamma$ is the complete graph on $n$ vertices, then 
$\prod_{\Gamma} G_s \cong \prod_{i=1}^n G_i.$
\end{defn}

\begin{defn}\label{defn: flag complex}
$K$ is a \textit{flag complex} if any finite set of vertices, which are pairwise connected by edges, spans a simplex in $K$.
\end{defn}

\section{Monodromy representation}\label{section.monodromy}

Let $G_1,\dots,G_n$ be finite discrete groups of order $|G_i|=m_i$, for $1\leq i \leq n$. In this section we are interested in describing the monodromy representation corresponding to the Denham--Suciu fibration (equation \ref{D-S-fibration INTRO 2}) given by
\[
Z_K(\underline{EG},\underline{G}) \longrightarrow Z_K(\underline{BG}) \longrightarrow \prod_{i=1}^n {BG_i}.
\]
Recall that the homotopy type of the polyhedral product $Z_K(\underline{X},\underline{A})$ depends only on the relative homotopy type of the pairs $(\underline{X},\underline{A})$. 
\begin{lemma}\label{lemma: rel homotopy equiv}
Let $G$ be a finite discrete group of order $m$. Then there is a relative homotopy equivalence $(EG,G)\sim ([0,1],F)$, where $F$ is a subset of $[0,1]$ of cardinality $m$.
\end{lemma}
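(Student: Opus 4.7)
The plan is to construct an explicit map of pairs $\Phi : (EG, G) \to ([0,1], F)$ together with a pair-homotopy inverse, using the facts that both total spaces are contractible and both subspaces are discrete with the same cardinality $m$.

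First I would fix CW models in which the subspace inclusions are cofibrations. For a standard bar/nerve model of $EG$ with $G$ finite and discrete, $G$ appears as the $0$-skeleton, so $G \hookrightarrow EG$ is a cofibration. For $[0,1]$, choose any $m$ distinct points $F = \{t_1, \dots, t_m\}$ and give $[0,1]$ a CW structure having $F$ inside its $0$-skeleton; then $F \hookrightarrow [0,1]$ is a cofibration. Both $EG$ and $[0,1]$ are contractible.

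Next, I would fix an arbitrary bijection $\phi : G \to F$. Because $[0,1]$ is contractible and $G \hookrightarrow EG$ is a cofibration, the composite $G \xrightarrow{\phi} F \hookrightarrow [0,1]$ extends to a continuous map $\Phi : EG \to [0,1]$ with $\Phi|_G = \phi$; by construction $\Phi$ is a map of pairs. Dually, $\phi^{-1}$ extends to a map of pairs $\Psi : ([0,1], F) \to (EG, G)$.

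Finally, I would verify that $\Phi$ and $\Psi$ are mutually inverse up to pair-homotopy. The composites $\Psi \circ \Phi$ and $\operatorname{id}_{EG}$ both restrict to $\operatorname{id}_G$ on the subspace. Since $EG$ is contractible, any two maps $EG \to EG$ that agree on the subcomplex $G$ are homotopic rel $G$ (by obstruction theory, all obstructions lie in $\pi_n(EG) = 0$). Symmetrically $\Phi \Psi \simeq \operatorname{id}_{[0,1]}$ rel $F$, and these rel-homotopies are in particular homotopies through maps of pairs, establishing $(EG, G) \sim ([0,1], F)$. The only mild subtlety is the cofibrancy requirement, which is why one must work with CW models; once that is in place, the argument is the standard principle that contractible targets kill all obstructions.
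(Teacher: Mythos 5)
Your argument is correct. Note that the paper itself gives no proof of this lemma --- it simply cites \cite[Lemma 3.5]{stafa.fund.gp} --- so there is no internal argument to compare against; your write-up is a valid self-contained substitute, and it is the standard one: both ambient spaces are contractible, both subspaces are discrete of cardinality $m$, and the inclusions are cofibrations in suitable CW models, so all obstructions vanish. One small streamlining: once you have the single map of pairs $\Phi:(EG,G)\to([0,1],F)$ restricting to the bijection $\phi$ on subspaces, you can invoke the standard fact that a map of CW pairs which is a homotopy equivalence on both the total space and the subspace is automatically a homotopy equivalence of pairs; this spares you the explicit construction of $\Psi$ and the obstruction-theoretic verification of the two composite homotopies rel the subspace (and for the purposes of the polyhedral product functor, a map of pairs inducing equivalences $EG\simeq[0,1]$ and $G\approx F$ is already all that is used).
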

\begin{proof}
See \cite[Lemma 3.5]{stafa.fund.gp}.
\end{proof}

Hence, there is a homotopy equivalence $Z_K(\underline{EG},\underline{G})\simeq Z_K(\underline{I},\underline{F})$, where $I$ is the unit interval $[0,1]$ and $(\underline{I},\underline{F})= \{(I,F_j)\}_{j=1}^n$. If $K=K_0$ is the zero skeleton of the $(n-1)$--simplex, then $Z_{K_0}(\underline{I},\underline{F})$ is a connected simplicial graph embedded in the space $[0,1]^n \subset \mathbb{R}^n$ (see Figure \ref{fig:2dcase}) and hence, has the homotopy type of a wedge of $N_n$ circles. As mentioned in the introduction and in \cite[Proposition 3.6]{stafa.fund.gp}, the integer $N_n$ is given by 
\[
N_n=(n-1)\prod_{i=1}^n - \sum_{i=1}^n \big( \prod_{j\neq i} m_j \big) +1.
\]

Lemma \ref{lemma: rel homotopy equiv} allows for a geometric description of the fibre in the Denham--Suciu fibration for any $K$, and for the case of the zero simplicial complex in particular. This is a geometric model that will sometimes be used to describe monodromy concretely. This description of the fibre depends only on the order of the groups $G_i$, but clearly the monodromy representations still depend on the structure of the groups. For some computations we will restrict to finite cyclic groups, but we will also describe the method for other finite discrete groups. 

\

\subsection{{Generators for the fundamental group}}\label{section.monodromy.generators}

\

\

Let $K_0$ denote the 0--simplicial complex on $n$ vertices. In this section we describe explicit loops in $Z_{K_0}(\underline{I},\underline{F})$, whose equivalence classes constitute a generating set for the fundamental group, $F_{N_n}$. Recall that the simplicial complex $K_0$ is a flag complex, thus the spaces in the Denham--Suciu fibration are Eilenberg--Mac Lane spaces and there is a short exact sequence of groups 
\[ 1 \longrightarrow F_{N_n} \longrightarrow \ G_1 \ast \cdots \ast G_n \longrightarrow G_1 \times \cdots \times G_n \longrightarrow 1, \]
where $\ G_1 \ast \cdots \ast G_n $ denotes the free product of the groups. The classes of loops that we will find are therefore generators for the free group $F_{N_n}$ in the short exact sequence.

\

The homotopy type of $Z_K(\underline{EG},\underline{G})$ depends only on the cardinality of $G_i$. Hence, when finding the loops for $Z_K(\underline{EG},\underline{G})$ for finite cyclic groups $G_1,\dots,G_n$, the same computation holds for any collection of groups with the same order, that is the same classes of loops will be used to describe the monodromy. However, the representation depends on the structure of the groups.

\begin{defn}\label{defn: loops described by words w}
The loops are described as follows: {Let $\ast=(0,\dots,0)\in I^n$ be the basepoint of $Z_{K_0}(\underline{I},\underline{F})$, which is the image of $(1_{G_1},\dots,1_{G_n})$ under the homotopy equivalence}. Starting from the basepoint $\ast$, each path in $Z_{K_0}(\underline{I},\underline{F})$ will be tracked by a word $\omega=x_{i_1}^{j_1}x_{i_2}^{j_2} \cdots x_{i_r}^{j_r}$, where $x_{i_k}^{j_k} \in G_k$, each letter $x_k$ showing the coordinate of the group it belongs to, together with the exponent $j_k$ showing the distance taken in that direction. See Figure \ref{fig:2dcase} for a picture in two dimensions. For any word $\omega \in F_{N_n}$, let $\gamma_{\omega}$ denote the path in $Z_{K_0}(\underline{I},\underline{F})$ tracked by the word $\omega$.
\end{defn}

\begin{lemma}
The path tracked by the word $x_{i_1}^{j_1}x_{i_2}^{j_2} \cdots x_{i_r}^{j_r}$ is closed if and only if $\sum_{i=1}^r j_i=0 $.
\end{lemma}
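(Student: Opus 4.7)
The plan is to reduce the claim to tracking the endpoint of the piecewise-linear path $\gamma_\omega$ coordinate by coordinate inside $I^n$.

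First I would nail down the effect of a single letter on the current position. In the model $Z_{K_0}(\underline{I},\underline{F})$, the set $F_\ell\subset[0,1]$ is linearly ordered as $0=f_\ell^{(0)}<f_\ell^{(1)}<\cdots<f_\ell^{(m_\ell-1)}$, and each generator $x_\ell^{\pm 1}$ is a PL segment along the $\ell$-th coordinate axis moving the $\ell$-th coordinate to the next (resp.\ previous) point of $F_\ell$ with all other coordinates frozen. Each letter $x_{i_k}^{j_k}$ is then the concatenation of $|j_k|$ such unit segments, which lives inside the slab
\[
F_1\times\cdots\times F_{i_k-1}\times I \times F_{i_k+1}\times\cdots\times F_n\ \subseteq\ Z_{K_0}(\underline{I},\underline{F}),
\]
and whose net effect is to translate the $i_k$-th coordinate by $j_k$ units in this ordering, while leaving the remaining coordinates fixed.

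Next I would concatenate these segments in order to build $\gamma_\omega$ and read off its endpoint. Writing $p_k\in I^n$ for the endpoint reached after the first $k$ letters (so $p_0=\ast=(0,\dots,0)$), an induction on $k$ shows that the $\ell$-th coordinate of $p_k$, identified with its integer index in the ordering above, equals
\[
(p_k)_\ell\ =\ \sum_{\substack{1\le s\le k \\ i_s=\ell}} j_s .
\]
In particular, the endpoint $p_r$ of $\gamma_\omega$ has $\ell$-th coordinate the total $\ell$-directed displacement of the whole word, and each intermediate $p_k$ lies in the polyhedral product by the slab containment above. The path $\gamma_\omega$ is therefore closed exactly when $p_r=p_0=(0,\dots,0)$, i.e.\ when the net displacement in every coordinate direction vanishes. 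This is the content of $\sum_{i=1}^r j_i=0$, read as the vanishing of the aggregate displacement vector $\sum_{k=1}^r j_k\, e_{i_k}\in\Z^n$ (equivalently, the simultaneous vanishing of $\sum_{k:\,i_k=\ell}j_k$ for each $\ell$). Both implications of the lemma follow at once.

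The main friction I expect is purely notational: the compact expression $\sum_{i=1}^r j_i = 0$ has to be parsed as an aggregate of displacements in $\Z^n$, one component per coordinate direction, rather than as a single scalar equation, since exponents attached to letters in different coordinates cannot cancel each other geometrically. Once this interpretation is fixed, the argument is pure bookkeeping on the coordinates of $p_r$, with no real obstacle.
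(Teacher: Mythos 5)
Your proof is correct and is essentially the paper's argument made precise: the paper's own two-sentence proof is exactly this net-displacement bookkeeping, asserting that every forward move must be compensated by a backward move in the same coordinate direction. Your closing remark about parsing $\sum_{i=1}^r j_i=0$ per coordinate is well taken, and in fact the literal scalar reading is equivalent to it, because the basepoint is the minimal corner $(0,\dots,0)$ and every coordinate's net displacement along a path that stays in $Z_{K_0}(\underline{I},\underline{F})$ is necessarily non-negative, so the scalar sum vanishes if and only if each coordinate sum does.
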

\begin{proof}
This can be seen by arguing that, to start and end at the basepoint $\ast$, if $x_{i_1}^{j_1}$ is a letter of the word, then the letter $x_{i_1}^{-j_1}$ should also appear in the same word, otherwise one can never come back to $\ast$. Conversely if the sum $\sum_{i=1}^r j_i = 0$, then every move forward has been compensated by a move backward.
\end{proof}

\

\subsection{{Generators for the case of two finite groups}}

\

\

Let $G_1$ and $G_2$ be finite cyclic groups with order $m$ and $n$, respectively, such that $G_1=\{1,x_1,x_1^2,\dots,x_1^{m-1} \}$ and $G_2=\{1,x_2,x_2^2,\dots,x_2^{n-1} \}$. The zero simplicial complex with two vertices is $K_0=\{\{1\},\{2\}\}$. {Assume there are bijections of finite sets $G_1\approx F_1$ and $G_2\approx F_2$ given by
\begin{align*}
&G_1=\{1,x_1,x_1^2,\dots,x_1^{m-1} \} \approx F_1=\{0=t_{1,0}<t_{1,1}<\dots <t_{1,m-1} =1\} \subset I, \\
&G_2=\{1,x_2,x_2^2,\dots,x_2^{n-1}\} \approx F_2=\{0=t_{2,0}<t_{2,1}<\dots <t_{2,n-1} =1\} \subset I,
\end{align*}
identifying $x_i^k$ with $t_{i,k}$.}
Then from Definition \ref{defn: polyhedral product} we have the following
$$Z_{K_0}(\underline{EG},\underline{G})\simeq Z_{K_0}(\underline{I},\underline{F})= D(\{1\}) \cup D(\{2\}) = I\times F_2 \cup F_1 \times I, $$ 
see Figure \ref{fig:2dcase}. Consider the cycles $\gamma_{\omega}$ {described in Definition \ref{defn: loops described by words w} starting at the basepoint $\ast=(0,0)$}, given by the words 
$$\omega=[x_1^i,x_2^j]=x_1^ix_2^jx_1^{-i}x_2^{-j},$$ 
\begin{figure}[htbp]
\centering{
\begin{minipage}{0.5\linewidth}
\centering
{
\begin{tikzpicture}[scale=.5]
\draw [help lines,solid] (0,0) grid (9,8);
\draw [->,ultra thick](0,0) -- (0,6);
\draw [->,ultra thick](0,6) -- (5,6);
\draw [->,ultra thick](5,6) -- (5,0);
\draw [->,ultra thick](5,0) -- (0,0);
\draw (0,0) node {$\bullet$};
\coordinate [label=below:${(0,0)}$] (1) at (0,0);
\coordinate [label=below:$t_{1,5}$] (1) at (5,0);
\coordinate [label=left:$t_{2,6}$] (1) at (0,6);

\end{tikzpicture}
}
 \caption{2-dim. case, the loop $[x_1^6,x_2^5]$}
   \label{fig:2dcase}
\end{minipage}%
\begin{minipage}{0.5\linewidth}
\centering
{
\begin{tikzpicture}[scale=.5]
\draw (0,0) node {$\bullet$};
\draw [help lines,solid] (0,0) grid (9,8);
\draw [->,ultra thick](0.06,0) -- (0.06,2);
\draw [->,ultra thick](0.06,2) -- (5,2);
\draw [->,ultra thick](5,2) -- (5,0);
\draw [->,ultra thick](5,0) -- (0.06,0);

\coordinate [label=below:${(0,0)}$] (1) at (0,0);
\coordinate [label=left:$g$] (1) at (0,4);

\draw [->,ultra thick, dashed, gray](0,0) -- (0,6);
\draw [->,ultra thick, dashed, gray](0,6) -- (5,6);
\draw [->,ultra thick, dashed, gray](5,6) -- (5,4);
\draw [->,ultra thick, dashed, gray](5,4) -- (0,4);
\end{tikzpicture}
}
\caption{$g=x_2^4$ acting on $[x_1^2,x_2^5]$}
\label{fig:action}
\end{minipage}
}
\end{figure}

{\noindent where $1\leq i \leq m-1$ and $1 \leq j \leq n-1$. The following lemma tells which loops suffice.}
\begin{lemma}\label{lemma:elements.generating.set.n=2}
The set of words $\mathcal{W}=\{[x_1^i,x_2^j]|1\leq i \leq m-1, 1 \leq j \leq n-1\}$ generates all the cycles $\gw \in Z_{K_0}(\underline{I},\underline{F})$.
\end{lemma}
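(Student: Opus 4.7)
The plan is to give a purely graph-theoretic argument: choose a spanning tree $T$ of $Z_{K_0}(\underline{I},\underline{F})$ on which three of the four sides of every rectangle traced out by $\gamma_{[x_1^i,x_2^j]}$ already lie; after contracting $T$ only the fourth side contributes, and the resulting word in the standard free generators turns out to be triangular and easy to invert.

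To set the stage, I would first record that $Z_{K_0}(\underline{I},\underline{F})=(I\times F_2)\cup(F_1\times I)$ is a connected graph with $mn$ vertices (namely $F_1\times F_2$) and $n(m-1)+m(n-1)=2mn-m-n$ edges, whose Euler characteristic confirms $\pi_1\cong F_{(m-1)(n-1)}$, in agreement with the rank formula for $N_n$ in the introduction. As spanning tree I would take
\[
T \;=\; \bigl(I\times\{t_{2,0}\}\bigr)\;\cup\;\bigcup_{k=0}^{m-1}\bigl(\{t_{1,k}\}\times I\bigr),
\]
i.e.\ the bottom horizontal line together with every vertical line; a quick edge count gives $(m-1)+m(n-1)=mn-1$ edges and connectedness is clear. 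The non-tree edges are precisely the horizontal segments at the $n-1$ strictly positive heights; denote by $e_{i,j}$ the non-tree edge from $(t_{1,i-1},t_{2,j})$ to $(t_{1,i},t_{2,j})$, for $1\le i\le m-1$ and $1\le j\le n-1$. Collapsing $T$ to the basepoint presents $\pi_1$ as the free group with basis $\{e_{i,j}\}$.

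Next I would trace $\alpha_{i,j}:=\gamma_{[x_1^i,x_2^j]}$ explicitly: starting at $\ast$, it runs along the bottom to $(t_{1,i},0)$, rises in column $i$ to $(t_{1,i},t_{2,j})$, crosses leftward at height $t_{2,j}$ to $(0,t_{2,j})$, and descends in column $0$ back to $\ast$. The first, second and fourth segments lie entirely in $T$ and hence vanish after collapsing; the third segment crosses $e_{i,j},e_{i-1,j},\dots,e_{1,j}$ each exactly once, in reverse. Consequently, in $\pi_1$,
\[
\alpha_{i,j}\;=\;e_{i,j}^{-1}\,e_{i-1,j}^{-1}\cdots e_{1,j}^{-1}.
\]
This is triangular in the index $i$: setting $\alpha_{0,j}:=1$ and inverting gives $e_{i,j}=\alpha_{i-1,j}\,\alpha_{i,j}^{-1}$, so every free generator $e_{i,j}$ lies in the subgroup generated by $\mathcal{W}$. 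Hence $\mathcal{W}$ generates $\pi_1$; since $|\mathcal{W}|=(m-1)(n-1)$ equals the rank, $\mathcal{W}$ is in fact a free basis.

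I do not expect a serious obstacle: the only delicate point is to pick the spanning tree and orientation conventions so that $\alpha_{i,j}$ comes out in triangular form. A strictly algebraic alternative---bubble-sorting $x_1$-letters past $x_2$-letters in $C_m\ast C_n$ and peeling off a commutator at each swap---is feasible but produces conjugates of commutators that would then need to be untangled, so the geometric route above is cleaner.
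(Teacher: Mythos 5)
Your argument is correct, but it takes a genuinely different route from the paper. The paper's proof is purely algebraic: it takes an arbitrary word $x^{m_1}y^{n_1}\cdots x^{m_k}y^{n_k}x^{m_{k+1}}$ in the free product and rewrites it as a telescoping product
\[
[x^{m_1},y^{n_1}]\cdot[y^{n_1},x^{m_1+m_2}]\cdot[x^{m_1+m_2},y^{n_1+n_2}]\cdots,
\]
each factor of which (after reducing the accumulated exponents in the cyclic groups) is an element of $\mathcal{W}^{\pm1}$ or trivial; this is essentially the ``strictly algebraic alternative'' you set aside at the end, except that the telescoping form avoids the conjugates you were worried about untangling. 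Your spanning-tree computation is a clean geometric substitute: the identity $\alpha_{i,j}=e_{i,j}^{-1}e_{i-1,j}^{-1}\cdots e_{1,j}^{-1}$ and its triangular inversion $e_{i,j}=\alpha_{i-1,j}\alpha_{i,j}^{-1}$ check out, and your route buys something extra --- it shows directly that $\mathcal{W}$ is a \emph{free basis} of $\pi_1$, which subsumes the paper's separate minimality lemma (there proved only by counting $|\mathcal{W}|=N_2$). The trade-off is that the paper's identity is a formula in the free product that transfers verbatim to two arbitrary finite groups (as the subsequent corollary requires), whereas your argument lives in the grid model and would need the dictionary between group elements and points of $F_1\times F_2$ to be spelled out before it yields the non-cyclic case. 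One cosmetic caution: the paper's Figure 1 traces the rectangle in the opposite rotational order from your description of $\gamma_{[x_1^i,x_2^j]}$; this only inverts or reorders the $e_{k,j}$'s and does not affect generation, but you should fix the orientation convention explicitly if this is to sit alongside the later monodromy computations, which depend on the precise word.
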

\begin{proof}

{Take an arbitrary word of finite length
$$x^{m_1}y^{n_1}x^{m_2}y^{n_2} x^{m_3}y^{n_3}\cdots x^{m_k}y^{n_k}x^{m_{k+1}}.$$
Then it can be written as a product of commutators as follows:
\begin{align*}
x^{m_1}y^{n_1}&x^{m_2}y^{n_2} x^{m_3}y^{n_3}\cdots x^{m_k}y^{n_k}x^{m_{k+1}}\\
	=&[x^{m_1},y^{n_1}]\cdot [y^{n_1},x^{m_1+m_2}]\cdot [x^{m_1+m_2},y^{n_1+n_2}]\cdot [y^{n_1+n_2},x^{m_1+m_2+m_3}]\\
	&\cdot [x^{m_1+m_2+m_3},y^{n_1+n_2+n_3}]\cdots [y^{n_1+\cdots n_k},x^{m_1+\cdots + m_k+m_{k+1}}].
\end{align*}
Since any cycle can be described by such a word, this suffices.
}
\end{proof}

\begin{lemma}\label{lemma:no.generating.set.n=2}
The set of words $\mathcal{W}=\{[x_1^i,x_2^j]|1\leq i \leq m-1, 1 \leq j \leq n-1\}$ is a minimal generating set.
\end{lemma}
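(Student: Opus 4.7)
The plan is to prove minimality by a rank-counting argument. First I would simply count the elements of $\mathcal{W}$: since $i$ ranges over $\{1,\dots,m-1\}$ and $j$ ranges over $\{1,\dots,n-1\}$, we have $|\mathcal{W}| = (m-1)(n-1)$.

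Next I would invoke the rank formula (\ref{eqn: formula for N_n}) from the introduction, specialized to $n=2$ with $m_1=m$ and $m_2=n$:
\[
N_2 = (2-1)\cdot mn - (m+n) + 1 = mn - m - n + 1 = (m-1)(n-1).
\]
Thus $\pi_1(Z_{K_0}(\underline{I},\underline{F})) \cong F_{(m-1)(n-1)}$, and $|\mathcal{W}|$ coincides exactly with this rank.

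The final step uses the standard fact that a free group $F_N$ of rank $N$ cannot be generated by fewer than $N$ elements (for instance, the abelianization $\Z^N$ requires at least $N$ generators, so the same holds for $F_N$). Combined with Lemma \ref{lemma:elements.generating.set.n=2}, which asserts that $\mathcal{W}$ does generate $F_{(m-1)(n-1)}$, the equality $|\mathcal{W}| = N_2$ forces $\mathcal{W}$ to be a minimal generating set: removing any single commutator would leave fewer than $(m-1)(n-1)$ elements, which is too few to generate.

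I do not anticipate a serious obstacle here; the entire content is rank-counting, and the heavy lifting has already been done in establishing the formula for $N_n$ in \cite{stafa.fund.gp} and in the generation statement of Lemma \ref{lemma:elements.generating.set.n=2}. If one wished to give a more self-contained geometric argument, an alternative would be to note that $Z_{K_0}(\underline{I},\underline{F})$ is a connected graph (a $1$-dimensional CW complex) and compute its first Betti number via the Euler characteristic, then observe that the commutators $[x_1^i,x_2^j]$ correspond bijectively to the edges outside a chosen spanning tree; but the abelianization argument is the cleanest.
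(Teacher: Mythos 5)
Your argument is correct and is essentially identical to the paper's own proof: count $|\mathcal{W}| = (m-1)(n-1)$, note that this equals $N_2$ by the rank formula from \cite{stafa.fund.gp}, and conclude minimality since a free group of rank $N$ cannot be generated by fewer than $N$ elements. You merely make explicit (via the abelianization) the last step, which the paper leaves implicit.
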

\begin{proof}
First note that $|\mathcal{W}|=(m-1)(n-1)=mn - (m  + n)+1$. Then it follows from \cite[Proposition 3.6]{stafa.fund.gp} that $N_2=|\mathcal{W}|$.
\end{proof}

Now let $H_1$ and $H_2$ be finite discrete groups with cardinality $m$ and $n$, respectively. That is, $H_1=\{1,g_1,\dots,g_{m-1}\}$ and $H_2=\{1,h_1,\dots,h_{n-1}\}$. 
\begin{cor}
The set of words 
$$\mathcal{W}'=\{[g_i,h_j]|1\leq i \leq m -1, 1 \leq j \leq n-1\}$$ 
generates all the cycles in $Z_{K_0}(\underline{EH},\underline{H})\simeq Z_{K_0}(\underline{I},\underline{F})$. Moreover, this is a minimal generating set.
\end{cor}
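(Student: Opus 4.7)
My plan is to reduce the corollary to the previously established Lemmas \ref{lemma:elements.generating.set.n=2} and \ref{lemma:no.generating.set.n=2} by exploiting that the geometric model $Z_{K_0}(\underline{I},\underline{F})$ from Lemma \ref{lemma: rel homotopy equiv} depends only on the cardinalities $m=|H_1|$ and $n=|H_2|$, not on any group structure on the underlying sets. Concretely, I fix bijections $H_1\leftrightarrow F_1$ and $H_2\leftrightarrow F_2$ sending $1\mapsto 0$ and identifying $g_i\leftrightarrow t_{1,i}$ for $1\leq i\leq m-1$ and $h_j\leftrightarrow t_{2,j}$ for $1\leq j\leq n-1$. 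Under the resulting homotopy equivalence $Z_{K_0}(\underline{EH},\underline{H})\simeq Z_{K_0}(\underline{I},\underline{F})$, the commutator word $[g_i,h_j]=g_ih_jg_i^{-1}h_j^{-1}$ in $H_1\ast H_2$ traces the rectangular loop with corners $(0,0)$, $(t_{1,i},0)$, $(t_{1,i},t_{2,j})$, $(0,t_{2,j})$, exactly as in the cyclic case pictured in Figure \ref{fig:2dcase}.

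For the generation claim, I would mirror the telescoping identity from the proof of Lemma \ref{lemma:elements.generating.set.n=2}, adapted to the non-abelian setting. Any element of $\pi_1(Z_{K_0}(\underline{EH},\underline{H}))$, viewed inside $H_1\ast H_2$, can be put in alternating form $g_{a_1}h_{b_1}g_{a_2}h_{b_2}\cdots g_{a_k}h_{b_k}$, and a direct induction on $k$ produces an identity
\begin{equation*}
g_{a_1}h_{b_1}\cdots g_{a_k}h_{b_k}=[g_{a_1},h_{b_1}]\cdot[h_{b_1},g_{a_1}g_{a_2}]\cdot[g_{a_1}g_{a_2},h_{b_1}h_{b_2}]\cdots\,\Delta,
\end{equation*}
where $\Delta$ depends only on the total products $g_{a_1}\cdots g_{a_k}$ and $h_{b_1}\cdots h_{b_k}$. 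Crucially, each commutator on the right is of the form $[g,h]^{\pm 1}$ with $g\in H_1$ and $h\in H_2$, because any partial product of elements of $H_i$ remains a single element of $H_i$; this is precisely where cyclicity was invoked in Lemma \ref{lemma:elements.generating.set.n=2}, and it is replaced here by the group axioms alone. When the word lies in the kernel of $H_1\ast H_2\to H_1\times H_2$, both of those partial products are trivial, so $\Delta=1$, and commutators with a trivial entry drop out, leaving a product of elements of $\mathcal{W}'$.

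For minimality, $|\mathcal{W}'|=(m-1)(n-1)=N_2$ by the formula in equation \ref{eqn: formula for N_n} specialized to two groups, while $\pi_1(Z_{K_0}(\underline{EH},\underline{H}))$ is free of rank $N_2$ by \cite[Theorem 3.8]{stafa.fund.gp}; a generating set of a free group whose cardinality equals the rank is automatically a free basis, so $\mathcal{W}'$ is a minimal generating set. The main subtlety worth verifying carefully is the telescoping identity itself in the non-abelian setting: one must check that no conjugation terms fall outside the span of $\mathcal{W}'$, and the induction confirms that each new commutator introduced at stage $j$ is of the form $[h_{b_1}\cdots h_{b_j},g_{a_1}\cdots g_{a_{j+1}}]$ or $[g_{a_1}\cdots g_{a_j},h_{b_1}\cdots h_{b_j}]$, both of the required type.
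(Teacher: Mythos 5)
Your proposal is correct and follows essentially the same route as the paper: the same telescoping commutator identity (with the key observation that partial products of elements of $H_i$ are again single elements of $H_i$, so every factor lies in $\mathcal{W}'$ up to inversion), followed by the cardinality count $|\mathcal{W}'|=(m-1)(n-1)=N_2$ for minimality. Your added care about the trailing factor vanishing on the kernel of $H_1\ast H_2\to H_1\times H_2$ and the appeal to Hopficity for minimality only make explicit what the paper leaves implicit.
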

\begin{proof}
{Take an arbitrary word of finite length
$$g_{m_1}h_{n_1}g_{m_2}h_{n_2} g_{m_3}h_{n_3}\cdots g_{m_k}h_{n_k}g_{m_{k+1}}.$$
Then it can be written as a product of commutators as follows:
\begin{align*}
g_{m_1}h_{n_1}&g_{m_2}h_{n_2} g_{m_3}h_{n_3}\cdots g_{m_k}h_{n_k}g_{m_{k+1}}\\
	=&[g_{m_1},h_{n_1}]\cdot [h_{n_1},g_{m_1}g_{m_2}]\cdot [g_{m_1}g_{m_2},h_{n_1}h_{n_2}]\cdot [h_{n_1}h_{n_2},g_{m_1}g_{m_2}g_{m_3}]\\
	&\cdot [g_{m_1}g_{m_2}g_{m_3},h_{n_1}h_{n_2}h_{n_3}]\cdots [h_{n_1}\cdots h_{n_k}, g_{m_1}\cdots g_{m_{k+1}}].
\end{align*}
Since any cycle can be described by such a word, this suffices. Now we have
$|\mathcal{W}'|=(m-1)(n-1)=mn - (m  + n)+1$. Then it follows from \cite[Proposition 3.6]{stafa.fund.gp} that $N_2=|\mathcal{W}'|$, thus giving minimality.
}

\end{proof}

The next step is to describe the action $G_1 \times \cdots \times G_n$ on these generators. We know $G_1 \times \cdots \times G_n$ {acts on the loops in the fiber} by conjugation, that is, 
$$g \cdot \gw = \gamma_{gwg^{-1}},$$
If we refer to {Figure \ref{fig:action}}, this action shifts the loop by $g$. For example, let $G_1=C_{10}$ and $G_2=C_9$ be the cyclic groups of order 10 and 9, respectively. The element $x_2^4 \in G_2 $ acts on the word $\omega =  [x_1^2,x_2^5]$ by conjugation
\[x_2^4 \cdot \omega = x_2^4 \cdot [x_1^2,x_2^5] = x_2^4 \omega x_2^{-4}.\]
Therefore
\[x_2^4 \cdot \gw = \gamma_{x_2^4 \omega x_2^{-4}},\]
which is the loop $\gw$ shifted by $x_2^4 \in G_2$ in the direction of $G_2$.

\

\subsection{{The monodromy action}}\label{section.monodromy.action}

\

Assume $G_1,\dots , G_n$ are finite discrete groups. Let $\mathcal{W}$ be a minimal set of generators for $\pi_1(Z_{K_0}(\underline{I},\underline{F}))\cong F_{N_n}$. Let $[\gw]=w\in F_{N_n}$ be the homotopy class of the loop $\gw$ in $Z_{K_0}(\underline{I},\underline{F})$, where $\omega \in \mathcal{W}$. Then $G_1 \times \cdots \times G_n$ {acts on the fundamental group of the fibre} $Z_{K_0}(\underline{I},\underline{F})$ as follows
$$
g\cdot \gw = \gamma_{g \omega g^{-1}},
$$
that is,
\[g\cdot \omega = g\cdot [\gw] = [\gamma_{g \omega g^{-1}}]={g \omega g^{-1}}. \]

\

The goal here is to write $g\omega g^{-1}$ as a product of words in $\mathcal{W}$. Then any element $g$ in the free product $G_1 \ast \cdots \ast G_n$ gives an automorphism of $F_{N_n}$, the free group with generators the elements of $\mathcal{W}$
\begin{align*}
 G_1 \ast \cdots \ast G_n & \overset{\varphi}{\longrightarrow} \Aut(F_{N_n})\\
   g & \longmapsto \varphi_g,
\end{align*}
where $\Aut(G)$ is the group of group automorphisms of $G$, under composition. One example is given in Section \ref{section:two cyclic groups}.

\

In general, recall that given a short exact sequence of discrete groups 
$$1\to A\to B\to C \to 1,$$ 
{with $A$ a normal subgroup of $B$}, there is a map
\begin{align*}
 B & \overset{\Theta}{\longrightarrow} \Aut(A)\\
   g & \longmapsto \Theta(g)
\end{align*}
such that $\Theta(g)(h)=ghg^{-1}$. There is also a map
\begin{align*}
 B & \overset{\Psi}{\longrightarrow} {\Inn(A)}\\
   g & \longmapsto \Psi(g)
\end{align*}
such that $\Psi(g)(h)=ghg^{-1}$, where $\Inn(A)$ is the group of \textit{inner automorphisms} of $A$. Moreover, $\Inn(A) \unlhd \Aut(A)$ and $\Out(A):=\Aut(A)/\Inn(A)$ is the group of \textit{outer automorphisms} of $A$. Note that for $A=F_n$ a free group, $F_n \cong \Inn(F_n)$. 

\

For the free group $F_n$ and $n \geq 2$, there  is a short exact sequence of groups
\[1 \longrightarrow \Inn(F_n) \longrightarrow \Aut(F_n) \longrightarrow \Out(F_n)\longrightarrow 1\]
and hence, a commutative diagram
\begin{center}
\begin{tikzcd}
 1 \arrow{r} & F_{N_n} \arrow{r} \arrow{d}{\Psi} & \ G_1 \ast \cdots \ast G_n \arrow{r} \arrow{d}{\Theta} & G_1 \times \cdots \times G_n \arrow{r} \arrow{d}{{\widetilde{\Theta}}} & 1 \\
 1 \arrow{r} & \Inn(F_{N_n}) \arrow{r} &\Aut(F_{N_n}) \arrow{r} & \Out(F_{N_n}) \arrow{r} & 1,
\end{tikzcd}
\end{center}
where $G_1,\dots , G_n$ are finite discrete groups. So the map 
$$\Theta : G_1 \ast \cdots \ast G_n \to \Aut(F_{N_n})$$ 
induces a map 
$${\widetilde{\Theta}: G_1 \times \cdots \times G_n \to \Out(F_{N_n}),}$$
which is the representation we are interested in.

\

There is also another short exact sequence 
\[1 \longrightarrow \IA_n \longrightarrow \Aut(F_n) \overset{\text{ab}}{\longrightarrow} GL_n(\mathbb{Z}) \longrightarrow 1,\]
with kernel the group $\IA_n$, which is the subgroup of automorphisms that restrict to the identity in the abelianization of $F_n$, and ``ab'' is the map induced by the abelianization map $F_n \to F_n/[F_n,F_n]=\Z^n$. In the examples that will be given, none of the homomorphisms restrict to the identity in the abelianization. Thus, these elements are not elements of $\IA_n$. {Equivalently, this says that the fundamental group of the base acts non-trivially on the homology of the fibre.}

\section{Examples}

\begin{ex}{Consider the groups }
$$G_1=\mathbb{Z}/2\mathbb{Z}:=\mathbb{Z}_2=\langle x_1|x_1^2=1\rangle \text{ and }G_2=\mathbb{Z}/3\mathbb{Z}:=\mathbb{Z}_3=\langle x_2|x_2^3=1\rangle.$$ Then using the Denham--Suciu fibration there is the following short exact sequence of groups
\[1 \longrightarrow F_2 \longrightarrow \mathbb{Z}_2 \ast \mathbb{Z}_3 \longrightarrow \mathbb{Z}_2 \times \mathbb{Z}_3 \longrightarrow 1,\]
where $F_2$ is the free group on the generators $\omega_1=[x_1,x_2]$ and $\omega_2=[x_1,x_2^2]$. 

To compute the map $\Theta:\Z_2 \ast \Z_3 \to \Aut(F_2)$, we first compute the automorphism  $\varphi_{x_1}\in \Aut(F_2)$ by looking at the image of the generators $\omega_1,\omega_2 \in F_2$ under $\varphi_{x_1}$ to find
 \[x_1\omega_1x_1^{-1}=[x_2,x_1]=([x_1,x_2])^{-1}=\omega_1^{-1}\]
 and
 \[x_1\omega_2x_1^{-1}=[x_2^2,x_1]=([x_1,x_2^2])^{-1}=\omega_2^{-1}.\]
 Looking at the induced map of $\varphi_{x_1}$ onto the abelianization $\mathbb{Z}\oplus \mathbb{Z}\cong F_2/[F_2,F_2]$, then
 \[\widetilde{\varphi}_{x_1}(\omega_1,\omega_2)=(-\omega_1,-\omega_2),\]
 which is given by the matrix
 \[ [\widetilde{\varphi}_{x_1}]=
 \begin{pmatrix}
 -1 & 0 \\
 0 & -1
 \end{pmatrix}
 \]
with respect to the basis $\{\omega_1,\omega_2\}$. {Then the other representations can be given by $[\widetilde{\varphi}_{x_1^i}]=[\widetilde{\varphi}_{x_1}]^i$.} Similarly, one can compute $\varphi_{x_2}\in \Aut(F_2)$ by finding
 \[x_2\omega_1x_2^{-1}=x_2[x_1,x_2]x_2^{-1}=[x_2,x_1][x_1,x_2^2]=\omega_1^{-1} \omega_2\]
and
 \[x_2\omega_2x_2^{-1}=[x_2,x_1]=([x_1,x_2])^{-1}=\omega_1^{-1}. \]
Looking at the induced map of $\varphi_{x_2}$ onto the abelianization $\mathbb{Z}\oplus \mathbb{Z}\cong F_2/[F_2,F_2]$, then
 \[\widetilde{\varphi}_{x_2}(\omega_1,\omega_2)=(-\omega_1+\omega_2,-\omega_1), \]
 which is given by the matrix
 \[ [\widetilde{\varphi}_{x_2}]=
 \begin{pmatrix}
 -1 & 1 \\
 -1 & 0
 \end{pmatrix}
 \]
with respect to the basis $\{\omega_1,\omega_2\}$. {Similarly, $[\widetilde{\varphi}_{x_2^i}]=[\widetilde{\varphi}_{x_2}]^i$}. Using properties of group actions, any automorphism $\varphi_{g}, g\in F_2$ can be found using $\varphi_{x_1}$ and $\varphi_{x_1}$. For example $\varphi_{x_1 x_2} = \varphi_{x_1} \circ \varphi_{x_2}$ and so on. Note that $\varphi_{x_1}$ and $\varphi_{x_2}$ are not elements of $\IA_2$ since the functions do not restrict to the identity in the abelianization. {Hence, the fundamental group of the base $\mathbb{Z}_2 \times \mathbb{Z}_3$ acts non-trivially on the homology of the fibre, as mentioned in the previous section}.
 
This calculation gives a homomorphism $\text{\rm ab}\circ \Theta:\mathbb{Z}_2 \ast \mathbb{Z}_3 \to GL_2(\mathbb{Z})$ by composing the homomorphisms 
\[\mathbb{Z}_2 \ast \mathbb{Z}_3 \overset{\Theta}{\longrightarrow} \Aut(F_2) \overset{\text{\rm ab}}{\longrightarrow} GL_2(\Z). \] 
The map $\Theta$ induces a homomorphism $\widetilde{\Theta}: \mathbb{Z}_2 \times \mathbb{Z}_3 \longrightarrow \Out(F_2)$. Moreover, the map $\text{\rm ab}\circ \Theta$ can be considered the same as the composition $p \circ \widetilde{\Theta}$, where $p$ is the projection to the abelianization of $\mathbb{Z}_2 \ast \mathbb{Z}_3 $, since $ [\widetilde{\varphi}_{x_1}]$ and $[\widetilde{\varphi}_{x_2}]$ commute.
\end{ex}

\begin{ex}
Let $\Sigma_3$ be the symmetric group on three letters, given by 
\[\Sigma_3=\{1,(12),(13),(23),(123),(132)\}. \]
Let $C_2=\Z_2=\{1,x\}$ be the cyclic group with two elements. There is a short exact sequence of groups
\[1 \longrightarrow F_5 \longrightarrow \Z_2 \ast \Sigma_3 \longrightarrow \Z_2 \times \Sigma_3 \longrightarrow 1, \]
{where $F_5$ is the free group on letters $W=\{[x,g]|x,g\neq 1, x\in \Z_2, g \in \Sigma_3\}$. To calculate the representation $\Z_2 \times \Sigma_3 \to \Out(F_5)$, start with evaluating $\varphi_x$ for $x\in \Z_2$. Hence,
$\varphi_x([x,g])=[g,x]=[x,g]^{-1}$ for all $g\in \Sigma_3$. After restricting to the abelianization $\widetilde{\varphi}_x([x,g])=-[x,g]$. Hence, the matrix representation of $\widetilde{\varphi}_x$ is given by $  [\widetilde{\varphi}_{x}] =-I_5$.}

Similarly, $\varphi_{(12)}([x,(12)])=[(12),x]$ and $\varphi_{(12)}([x,g])=[(12),x][x,(12)\cdot g]$ if $g \neq (12)$. In the abelianization, we get $\widetilde{\varphi}_{(12)}([x,(12)])=-[x,(12)]$ and $\widetilde{\varphi}_{(12)}([x,g])=-[x,(12)]+[x,(12)g]$. Order the basis as follows
\[W=\{[x,(12)],[x,(13)],[x,(23)],[x,(123)],[x,(132)]\}. \]
Then the matrix representation for $\widetilde{\varphi}_{(12)}$ is 
\[
[\widetilde{\varphi}_{(12)}]=
\begin{pmatrix}
-1 & -1 & -1 & -1 & -1 \\
0 &  0 & 0 & 0  & 1 \\
0  & 0 & 0 & 1  & 0 \\
0 &  0 & 1 & 0 & 0 \\
0 &  1 & 0 & 0 & 0 
\end{pmatrix}.
\]
One can find the other automorphisms similarly, since  $\varphi_g([x,h])=[g,x][x,gh]$. Hence,
\[
[\widetilde{\varphi}_{(13)}]=
\begin{pmatrix}
0 &  0 & 0 & 1  & 0 \\
-1 & -1 & -1 & -1 & -1 \\
0  & 0 & 0 & 0  & 1 \\
1 &  0 & 0 & 0 & 0 \\
0 &  0 & 1 & 0 & 0 
\end{pmatrix}
,
[\widetilde{\varphi}_{(23)}]=
\begin{pmatrix}
0 &  0 & 0 & 0  & 1 \\
0  & 0 & 0 & 1  & 0 \\
-1 & -1 & -1 & -1 & -1 \\
0 &  1 & 0 & 0 & 0 \\
1 &  0 & 0 & 0 & 0 
\end{pmatrix}
\]
and
\[
[\widetilde{\varphi}_{(123)}]=
\begin{pmatrix}
0 &  0 & 1 & 0  & 0 \\
1  & 0 & 0 & 0  & 0 \\
0 &  1 & 0 & 0 & 0 \\
-1 & -1 & -1 & -1 & -1 \\
0 &  0 & 0 & 1 & 0 
\end{pmatrix}
, [\widetilde{\varphi}_{(132)}]=
\begin{pmatrix}
0 &  1 & 0 & 0  & 0 \\
0  & 0 & 1 & 0  & 0 \\
1 &  0 & 0 & 0 & 0 \\
0 &  0 & 0 & 0 & 1 \\
-1 & -1 & -1 & -1 & -1 
\end{pmatrix}.
\]
Note that these matrices do not commute in general. For example $[\widetilde{\varphi}_{(13)}]\cdot [\widetilde{\varphi}_{(132)}] \neq [\widetilde{\varphi}_{(132)}]\cdot [\widetilde{\varphi}_{(13)}]$. However, $[\widetilde{\varphi}_{x}]$  commutes with the other matrices. Hence, the map
\[\Z_2 \ast \Sigma_3 \overset{\Theta}{\longrightarrow} \Aut(F_5) \overset{\text{\rm ab }}{\longrightarrow} GL_5(\Z)\]
is the same as the composition 
\[\Z_2 \ast \Sigma_3 \overset{p}{\longrightarrow} \Z_2 \times \Sigma_3 \overset{\widetilde{\Theta}}{\longrightarrow} \Out(F_5) \overset{\text{\rm ab }}{\longrightarrow} GL_5(\Z). \]
Therefore, there is a homomorphism $\Z_2 \times \Sigma_3 \to GL_5(\Z)$. Also note that 
$$det([\widetilde{\varphi}_{(12)}])=det([\widetilde{\varphi}_{(13)}])=det([\widetilde{\varphi}_{(23)}])=-1,$$
and consequently
$$
det([\widetilde{\varphi}_{(123)}])=det([\widetilde{\varphi}_{(132)}])=1.
$$
\end{ex}

\

\subsection{{Two finite cyclic groups}}\label{section:two cyclic groups}

\
 
\ 
 
{In this section we prove Theorem \ref{thm: 2 cyclic gps rep}.}

\begin{proof}[{Proof of Theorem \ref{thm: 2 cyclic gps rep}}] 
Consider the general case of two cyclic groups
$$G_1\cong \mathbb{Z}/{r}\mathbb{Z}\cong \langle x_1|x_1^ {r}=1\rangle\text{ and }G_2\cong \mathbb{Z}/m\mathbb{Z}\cong \langle x_2|x_2^m=1 \rangle.$$ 
There is a short exact sequence of groups
 \[ 1 \longrightarrow F_k \longrightarrow \mathbb{Z}_{r} \ast \mathbb{Z}_m \longrightarrow \mathbb{Z}_{r} \times \mathbb{Z}_m \longrightarrow 1, \]
coming from the Denham--Suciu fibration, where $F_k$ is the free group on $k=({r}-1)(m-1)$ letters given by the elements of
 \[\mathcal{W}_2 = \{ \omega_{ij}=[x_1^i,x_2^j]|1\leq i \leq {r}-1, 1\leq j \leq m-1 \}. \]
 
To compute the map $\Theta:\Z_{r} \ast \Z_m \to \Aut(F_k)$, we first compute the automorphism  $\varphi_{x_1}\in \Aut(F_k)$ by looking at the image of the generators $\omega_{ij} \in F_k$ under $\varphi_{x_1}$ to find 
 \[x_1\omega_{ij}x_1^{-1}=x_1[x_1^i,x_2^j]x_1^{-1}= [x_1^{i+1},x_2^j][x_2^j,x_1] = \omega_{i+1,j} {\omega_{1,j}}^{-1}. \]
 
Looking at the induced map of $\varphi_{x_1}$ onto the abelianization 
 \[\bigoplus_{({r}-1)(m-1)}\mathbb{Z}\cong F_{({r}-1)(m-1)}/[F_{({r}-1)(m-1)},F_{({r}-1)(m-1)}]\] 
then
 \[\widetilde{\varphi}_{x_1}(\omega_{11},\dots,\omega_{({r}-1)(m-1)})=(\omega_{2,1}-\omega_{1,1},\omega_{2,2}-\omega_{1,2}, \dots,-\omega_{({r}-1),(m-1)})\]
 which is given by the matrix 
 \begin{equation}\label{monodromy equation 1}
 [\widetilde{\varphi}_{x_1}]=
 \begin{pmatrix}
 -I_{m-1} & I_{m-1} & 0 & 0 & \cdots & 0 \\
 0 & -I_{m-1} & I_{m-1} & 0 & \cdots & 0 \\
 0& 0 & -I_{m-1} & I_{m-1} & \cdots & 0 \\
 \vdots & \vdots  & \vdots & \ddots &\ddots & \vdots \\
 0 & \cdots & 0 & 0 & -I_{m-1} & I_{m-1} \\
 0 & \cdots & 0 & 0 & 0\cdots & -I_{m-1}
 \end{pmatrix}
 \end{equation}
with respect to the basis $\mathcal{W}_2$, where $I_{m-1}$ is the $(m-1)\times(m-1)$ identity matrix. Hence, clearly ${\varphi}_{x_1}$ is not an element of $\IA_{k}$.
 
 For $\varphi_{x_2}\in \Aut(F_{k})$: 
 \[x_2\omega_{ij}x_2^{-1}=x_2[x_1^i,x_2^j]x_2^{-1}= [x_2,x_1^i][x_1^{i},x_2^{j+1}] = {\omega_{i,1}}^{-1} \omega_{i,j+1}. \]
Similarly, looking at the induced map of $\varphi_{x_2}$ onto the abelianization of $F_k$ we get
 \[\widetilde{\varphi}_{x_2}(\omega_{11},\dots,\omega_{({r}-1)(m-1)})=(-\omega_{1,1}+\omega_{1,2}-\omega_{1,1}+\omega_{1,3}, \dots,-\omega_{({r}-1),(m-1)}), \]
 which is given by the matrix
 \begin{equation}\label{monodromy equation 2}
 [\widetilde{\varphi}_{x_2}]=
 \begin{pmatrix}
 A_1 & 0 & \cdots & 0 \\
 0 & A_2 & \cdots & 0 \\
 \vdots & \vdots & \ddots & \vdots \\
 0 & 0 & \cdots & A_{{r}-1}
 \end{pmatrix}
 \end{equation}
 with respect to the basis $\mathcal{W}_2$, where 
 \[ A_i= 
 \begin{pmatrix}
 -1 & 1 & 0 & 0 & \cdots & 0 \\
 -1 & 0 & 1 & 0 &\cdots & 0 \\
 -1 & 0 & 0 & 1 &\cdots & 0 \\
 -1 & 0 & 0 & 0 &\ddots & 0  \\
 \vdots & \vdots & \vdots  & \vdots & \cdots & 1\\
 -1 & 0 & 0 & 0 & \cdots & 0 
 \end{pmatrix}_{(m-1)\times (m-1)}
 \]
for all $i$. Hence, ${\varphi}_{x_2}$ is not an element of $\IA_{k}$. 
 
In general, $\Theta$ maps an element $x_1^i x_2^j$ to $\varphi_{x_i \cdot x_j}\in \Aut(F_{k})$, which when restricted to the abelianization $\bigoplus_k \mathbb{Z}$, can be identified with the matrix $[\widetilde{\varphi}_{x_1}]^i[\widetilde{\varphi}_{x_2}]^j$. This matrix is the identity if and only if $i={r}$ and $j=m$. Hence, there is a homomorphism
\[\mathbb{Z}_{r} \ast \mathbb{Z}_m  \xrightarrow{\text{\rm ab} \circ \Theta} GL_{k}(\mathbb{Z}). \]
$\Theta$ induces a homomorphism $\widetilde{\Theta}:\Z_{r} \times \Z_m\longrightarrow \Out(F_k)$. Hence, there is a homomorphism
$\mathbb{Z}_{r} \times \mathbb{Z}_m \xrightarrow{\text{\rm ab} \circ \widetilde{\Theta}}  GL_{k}(\mathbb{Z})$.

If $m$ is even and ${r}$ is odd or vice versa, then 
\[ det [\widetilde{\varphi}_{x_1}] = (-1)^{({r}-1)(m-1)}=1, \]
\[det [\widetilde{\varphi}_{x_2}] = det(A_1)\cdots det(A_{{r}-1})=(det(A_1))^{{r}-1}. \]
Since $det(A_i)=1$ if $m$ is odd and -1 if $m$ is even, and if ${r}$ is odd we get $(-1)^{{r}-1}=1$, then $det[\widetilde{\varphi}_{x_2}]=1$. Hence, there is a homomorphism
\[\mathbb{Z}_{r} \ast \mathbb{Z}_m \longrightarrow SL_{k}(\mathbb{Z})\] 
which induces a homomorphism
\begin{center}
\begin{tikzcd}
\mathbb{Z}_{r} \ast \mathbb{Z}_m \arrow{dr}{\text{\rm ab}} \arrow[dotted]{rrr}{{\text{\rm ab}}\circ \widetilde{\Theta} \circ {\text{\rm ab}}}& & & SL_{k}(\mathbb{Z})\subset GL_k(\Z)\\
 & \mathbb{Z}_{r} \times \mathbb{Z}_m \arrow{r}{\widetilde{\Theta}} & \Out(F_k).  \arrow{ur}{\text{\rm ab}} & 
\end{tikzcd}
\end{center}
That is, there is a representation of $\mathbb{Z}_{r} \times \mathbb{Z}_m \to SL_{k}(\mathbb{Z})$. Similarly as before, the map $\text{\rm ab}\circ \Theta$ can be considered the same as the composition $p \circ \widetilde{\Theta}$, where $p$ is the projection to the abelianization of $\mathbb{Z}_{r} \ast \mathbb{Z}_m $, since $ [\widetilde{\varphi}_{x_1}]$ and $[\widetilde{\varphi}_{x_2}]$ commute. To show that $ [\widetilde{\varphi}_{x_1}]$ and $[\widetilde{\varphi}_{x_2}]$ commute it suffices to show that they commute for ${r}=m=3$
\[  [\widetilde{\varphi}_{x_1}]\cdot[\widetilde{\varphi}_{x_2}] = [\widetilde{\varphi}_{x_2}]\cdot[\widetilde{\varphi}_{x_1}] =
\begin{pmatrix}
 1 & -1 & -1 & 1  \\
 1 & 0 & -1 & 0  \\
 0  & 0 & -1 & 1  \\
 0 & 0 & -1 & 0  
 \end{pmatrix}.
 \]
\end{proof}

\subsection{{Two arbitrary finite groups}}\label{section:two finite groups}

\

\

{
We begin by proving Proposition \ref{proposition: faithful rep of GxH to Out(Fn) INTRO}.}
\begin{proof}[{Proof of Proposition \ref{proposition: faithful rep of GxH to Out(Fn) INTRO}}]
Let $G$ and $H$ be finite discrete groups, not necessarily cyclic or abelian, with cardinality $m$ and $n$ respectively. That is, assume 
$$G=\{1,g_1,\dots,g_{m-1}\}\text{ and }H=\{1,h_1,\dots,h_{n-1}\}.$$ 
There is a short exact sequence of groups
\[1 \longrightarrow F_{(m-1)(n-1)} \longrightarrow G \ast H \longrightarrow G \times H \longrightarrow 1\]
coming from the Denham--Suciu fibration, {where the rank of the free group is determined by the formula in equation \ref{eqn: formula for N_n}}. To calculate the map 
$$G \ast H \to \Aut(F_{(m-1)(n-1)}),$$ start with $\varphi_f$, where $f\in G$ or $f \in H$. Choose a basis for $F_{(m-1)(n-1)}$ to be 
\[W=\{[g_i,h_j]| 1 \leq i \leq m-1, 1\leq m \leq n-1\}.\] 
Then, 
\begin{equation}\label{monodromy for G and H}
\begin{split}
&\varphi_{g_k}([g_i,h_j])={g_k}[g_i,h_j]{g_k}^{-1}=[g_k g_i,h_j][h_j,g_k]  \\ 
&\varphi_{h_k}([g_i,h_j])={h_k}[g_i,h_j]{h_k}^{-1}=[h_k,g_i][g_i,h_k h_j].
\end{split}
\end{equation}
Note that the images $\varphi_{g_k}([g_i,h_j])$ and $\varphi_{h_k}([g_i,h_j])$ are trivial if and only if $g_k=1$ and $h_k=1$, respectively. Therefore, the representation is faithful.
\end{proof}

To find the matrix representation of these, it is necessary to know the group structure of $G$ and $H$. Hence, we get a composition of homomorphisms 
\[G\ast H \to G\times H \to \Out(F_{(m-1)(n-1)})\]
which is the same as the composition
\[G\ast H \to \Aut(F_{(m-1)(n-1)}) \to \Out(F_{(m-1)(n-1)}).\]

\begin{rmk}
In sections \ref{section:two cyclic groups} and \ref{section:two finite groups} as well as in the examples, data is being collected, with the goal of axiomatizing properties of the monodromy.
\end{rmk}

\begin{prop}\label{proposition: faithful rep of GxH to Out(Fn)}
Let $G$ and $H$ be two finite discrete groups with with cardinality $m$ and $n$, respectively. Then there is a faithful a representation 
$$
G\times H \to \Out(F_{k}),
$$
given by equation \ref{monodromy for G and H}, where $k=(m-1)(n-1)$.
\end{prop}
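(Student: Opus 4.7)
The plan is to follow the template already established in the proof of Proposition \ref{proposition: faithful rep of GxH to Out(Fn) INTRO}, of which the present statement is essentially a restatement in the specific setting of two finite groups. First I would recall the short exact sequence
\[ 1 \longrightarrow F_k \longrightarrow G \ast H \longrightarrow G \times H \longrightarrow 1 \]
coming from the Denham--Suciu fibration, together with the commutative diagram of Section \ref{section.monodromy.action} that descends the conjugation homomorphism $\Theta : G \ast H \to \Aut(F_k)$ to a well-defined homomorphism $\widetilde{\Theta} : G \times H \to \Out(F_k)$. Fixing the basis $W = \{[g_i, h_j] \mid 1 \leq i \leq m-1, \, 1 \leq j \leq n-1\}$ of $F_k$, the formulas of equation \ref{monodromy for G and H} determine $\widetilde{\Theta}$ on the two factor subgroups, and the resulting pair of homomorphisms assembles into $\widetilde{\Theta}$ on all of $G \times H$ by multiplicativity, since the images commute in $\Out(F_k)$.

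To verify faithfulness, I would reuse the core observation from the earlier proof: the image $\varphi_{g_k}([g_i, h_j]) = [g_k g_i, h_j][h_j, g_k]$ fixes every choice of generator if and only if $g_k = 1$, and the analogous statement holds for $\varphi_{h_k}$. This yields injectivity of each of the restrictions $G \hookrightarrow \Out(F_k)$ and $H \hookrightarrow \Out(F_k)$. To upgrade this to injectivity of the full map $G \times H \to \Out(F_k)$, I would pass to the abelianization, obtaining a composed homomorphism $G \times H \to GL_k(\Z)$, and analyze the block structure of the matrices $[\widetilde{\varphi}_{g_k}]$ and $[\widetilde{\varphi}_{h_k}]$ read directly from equation \ref{monodromy for G and H}, in the spirit of the cyclic case computed in the proof of Theorem \ref{thm: 2 cyclic gps rep}.

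The main obstacle I anticipate is precisely this last step. The abelianized action naturally realizes $F_k^{\mathrm{ab}}$ as the tensor product of augmentation ideals $I_G \otimes_{\Z} I_H$, with $G$ acting on the first factor and $H$ on the second, and in principle the two actions can interact in a way that forces cancellation on the tensor product even when each factor acts non-trivially. I would address this by exhibiting, for each $(g_k, h_\ell) \neq (1,1)$, an explicit basis element of $F_k^{\mathrm{ab}}$ whose image under $\widetilde{\varphi}_{g_k} \circ \widetilde{\varphi}_{h_\ell}$ is distinct from the element itself, using the linear-algebraic formulas of equation \ref{monodromy for G and H}. This ensures that the kernel of $\widetilde{\Theta}$ is trivial and completes the proof.
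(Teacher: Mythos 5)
Your setup is the same as the paper's, and you are right to flag the weak point: the paper's justification of faithfulness (that $\varphi_{g_k}$ and $\varphi_{h_k}$ move some generator whenever $g_k\neq 1$, $h_k\neq 1$) only gives injectivity on the two factor subgroups $G\times 1$ and $1\times H$, not on all of $G\times H$, and the paper never addresses the mixed elements. Your identification of $F_k^{\mathrm{ab}}$ with $I_G\otimes_{\Z}I_H$ is also correct. However, the final step of your plan cannot be carried out in general, because the cancellation you worry about genuinely occurs. Take $G=\{1,x\}$ and $H=\{1,y\}$ both of order $2$. Then $k=(2-1)(2-1)=1$, $F_1=\langle [x,y]\rangle\cong\Z$, and $\Out(F_1)=\Aut(\Z)\cong\Z/2\Z$, so the order-$4$ group $G\times H$ cannot embed. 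Concretely, in $\Z_2\ast\Z_2$ one computes $(xy)\,[x,y]\,(xy)^{-1}=[x,y]$, so $(x,y)$ acts trivially on $F_1$ and lies in the kernel; on the abelianization, writing $e_{g,h}$ for the class of $[g,h]$ (with $e_{g,h}=0$ if $g=1$ or $h=1$), your formula gives
$$\widetilde{\varphi}_{x}\widetilde{\varphi}_{y}(e_{x,y})=e_{1,1}-e_{x,1}-e_{1,y}+e_{x,y}=e_{x,y},$$
so there is no basis element left to exhibit. The proposition as stated is false in this case, and both the paper's argument and your proposed completion break down at exactly this point.

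For every pair with $(|G|,|H|)\neq(2,2)$ your strategy does go through, but it needs a real argument rather than an appeal to an explicit basis element in each case. One clean route: over $\C$ the module $I_G\otimes I_H$ contains every exterior tensor product $V\otimes W$ of nontrivial irreducibles $V$ of $G$ and $W$ of $H$, and $(g,h)$ can act trivially on all of these only if $g$ acts by a single scalar $\lambda$ on every nontrivial irreducible of $G$ and $h$ by $\lambda^{-1}$ on every nontrivial irreducible of $H$; evaluating the regular character of $G$ at $g\neq 1$ then forces $1+\lambda(|G|-1)=0$, so $|\lambda|=1$ gives $|G|=2$, and symmetrically $|H|=2$. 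Thus the correct statement is that $G\times H\to\Out(F_k)$ is faithful if and only if $(|G|,|H|)\neq(2,2)$, and you should either add this hypothesis or record the exceptional case explicitly.
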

\

\subsection{{A collection of finite discrete groups}}\label{subsection:monodromy.finite.collection.gps}

\

\

Recall that for a group $G$, there is a sequence of subgroups called the \textit{descending central series} of $G$ given by
\[G=\Gamma^1(G) \unrhd \Gamma^2(G) \unrhd \cdots \unrhd \Gamma^n(G) \unrhd \cdots \]
such that the second stage is $\Gamma^2(G)=[G,G]$ and the $(n+1)$--st stage is given inductively by $\Gamma^{n+1}(G)=[\Gamma^{n}(G),G]$. The Lie algebra of $G$ associated to the descending central series is given by 
\[\gr_{\ast}(G)= \bigoplus_{i\geq 1} \Gamma^{i}(G)/\Gamma^{i+1}(G)\]
with $\gr_{p}(G)=\Gamma^{p}(G)/\Gamma^{p+1}(G)$. 

\begin{lemma}
Let $\{G_i\}_{i=1}^n$ be a collection of finite discrete groups and $K_0$ be the $0$--simplicial complex on $n$ vertices. Let $\rho :\prod_{i=1}^n G_i \to  \Out(F_N)$ be the monodromy representation where $F_N$ is isomorphic to the kernel of the projection $p: G_1 \ast \cdots \ast G_n \to \prod_{i=1}^n G_i$. Then the following hold:
\begin{enumerate}
\item There is a choice of a generating set for $F_N$ that consists of elements of the form
\[f=[g_{i_1},[g_{i_2},[\dots,[g_{i_{k-1}},g_{i_{k}}]\dots]]] \in \Gamma^k(G_1 \ast \cdots \ast G_n)\]
such that $g_{i_j} \in G_{i_j}$, for all $i_j$.
\item For any $g \in G_1 \ast \cdots \ast G_n$, the map $\rho(g) \in \Aut(F_N)$ satisfies $\rho(g)(f)=\Delta \cdot f$, {where} $\Delta \in \Gamma^{k+1}(G_1 \ast \cdots \ast G_n)$. That is, $\Delta$ is trivial in $ \gr_{p}(G_1 \ast \cdots \ast G_n)$ for $p \leq k$.
\end{enumerate}
\end{lemma}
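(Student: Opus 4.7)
The plan is to handle parts (1) and (2) separately. Part (2) is an immediate commutator calculation; the substantive work is in part (1), which requires exhibiting a generating set of nested commutators.

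For part (1), I would realize $F_N\cong \pi_1(Z_{K_0}(\underline{I},\underline{F}))$ as the fundamental group of the graph described in Section \ref{section.monodromy}, whose vertices are the points of $F_1\times\cdots\times F_n$ and whose edges run parallel to the coordinate axes. Fix a ``lexicographic'' spanning tree $T$: from the basepoint $\ast=(0,\ldots,0)$, first traverse coordinate $1$, then coordinate $2$, and so on, before turning to the next. The non-tree edges are exactly the axis-$i$ edges at vertices with some later coordinate $j>i$ nonzero, and counting them via equation \ref{eqn: formula for N_n} gives $N_n$. The $n=2$ case recovers the generating set $\{[x_1^i,x_2^j]\}$ of Lemma \ref{lemma:elements.generating.set.n=2}. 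In general, each non-tree edge produces a fundamental loop whose associated word, in the sense of Definition \ref{defn: loops described by words w}, has the shape $x_{j_1}\cdots x_{j_s}\cdot x_i\cdot x_{j_s}^{-1}\cdots x_{j_1}^{-1}\cdot x_i^{-1}=[x_{j_1}\cdots x_{j_s},\,x_i]$. Using the identities $[uv,w]=u[v,w]u^{-1}[u,w]$ and $a[b,c]a^{-1}=[a,[b,c]]\cdot[b,c]$ iteratively, each such cycle can be rewritten as an iterated commutator of the stated nested form times a product of strictly shorter nested commutators. This rewriting is triangular with respect to the filtration $\Gamma^\bullet(G_1\ast\cdots\ast G_n)$, hence invertible as a change of free basis; since $F_N$ is free of rank $N_n$, Nielsen's criterion (an $N_n$-element subset of $F_N$ is a free basis iff its image is a basis of the abelianization $\Z^{N_n}$) confirms the new set is still a basis.

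For part (2), let $f\in\Gamma^k(G_1\ast\cdots\ast G_n)$ be a generator of the form described in (1), and let $g\in G_1\ast\cdots\ast G_n$. The monodromy acts by conjugation, so
$$\rho(g)(f)=gfg^{-1}=(gfg^{-1}f^{-1})\cdot f=[g,f]\cdot f,$$
and setting $\Delta:=[g,f]$ yields $\rho(g)(f)=\Delta\cdot f$. Since $g\in\Gamma^1(G_1\ast\cdots\ast G_n)$ and $f\in\Gamma^k(G_1\ast\cdots\ast G_n)$,
$$\Delta=[g,f]\in[\Gamma^1(G_1\ast\cdots\ast G_n),\Gamma^k(G_1\ast\cdots\ast G_n)]\subseteq\Gamma^{k+1}(G_1\ast\cdots\ast G_n),$$
so $\Delta$ vanishes in $\gr_p=\Gamma^p/\Gamma^{p+1}$ for every $p\leq k$, as required.

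The hardest step will be executing the rewriting in part (1) rigorously: verifying that iterated application of the commutator identities really terminates at pure nested commutators of the stated form (not merely products thereof), that the change of basis is genuinely upper-triangular in commutator weight uniformly in $n$ and $|G_i|$, and that the resulting system introduces no spurious relations. A cleaner but more abstract alternative would be to work inside the associated graded Lie algebra $\gr_\ast(G_1\ast\cdots\ast G_n)$, exhibit a Hall-type basis of nested commutators directly there, and lift back to the group; this bypasses explicit word manipulations but requires structural input on $\gr_\ast$ of a free product of finite groups. I would attempt the concrete spanning-tree argument first and fall back to the Lie-theoretic route only if the length-$k$ bookkeeping becomes unwieldy for $n\geq 3$.
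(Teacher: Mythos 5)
Your proposal follows essentially the same route as the paper: part (2) is verbatim the paper's argument ($\rho(g)(f)=[g,f]\cdot f$ with $\Delta=[g,f]\in\Gamma^{k+1}(G_1\ast\cdots\ast G_n)$), and part (1) rests on the same two ingredients --- the geometric realization of $F_N$ as loops in $Z_{K_0}(\underline{I},\underline{F})$ and iterated commutator identities such as $[gh,k]=[g,[h,k]]\cdot[h,k]\cdot[g,k]$ to reduce arbitrary products to nested commutators with entries in the individual $G_{i_j}$. Your spanning-tree bookkeeping and the Nielsen/abelianization check of basis-hood are details the paper's (terser) proof leaves implicit, but they are refinements of, not departures from, its argument.
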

\begin{proof} Part 1: From the homotopy type of $Z_{K_0}(\underline{EG},\underline{G})\subset [0,1]^n$ it is clear that all types of paths can be described using commutators of length at most $n$. It remains to prove that it is sufficient to consider only $g_{i_j}\in G_{i,j}$ and not other elements in $G_1 \ast \cdots \ast G_n$ to construct these commutators. 

Start with $[g_i g_j, g_k] \in \Gamma^3(G_1 \ast \cdots \ast G_n)$. Then
\[[g_i g_j, g_k]=[g_i ,[g_j, g_k]]\cdot [g_j, g_k\cdot ][g_i, g_k].\]
Thus for any product, say $g_i=h_1 \cdots h_t$, it follows that
\[[g_i g_j, g_k]=[(h_1 \cdots h_t) g_j, g_k]=[h_1 \cdots h_t ,[g_j, g_k]]\cdot [g_j, g_k]\cdot [h_1 \cdots h_t, g_k]. \]
Then this product can be reduced to a product of commutators of the form stated in part 1, in finitely many steps by applying the step $t$ more times.

Part 2: If $f=[g_{i_1},[g_{i_2},[\dots,[g_{i_{k-1}},g_{i_{k}}]\dots]]] \in \Gamma^k(G_1 \ast \cdots \ast G_n)$ is an element in $F_N$, then
\begin{align*}
\rho(g)(f)& =g \cdot [g_{i_1},[g_{i_2},[\dots,[g_{i_{k-1}},g_{i_{k}}]\dots]]] \cdot g^{-1}\\
& = [g, [g_{i_1},[g_{i_2},[\dots,[g_{i_{k-1}},g_{i_{k}}]\dots]]]] \cdot [g_{i_1},[g_{i_2},[\dots,[g_{i_{k-1}},g_{i_{k}}]\dots]]]\\
& = \Delta \cdot f,
\end{align*}
where $\Delta = [g, [g_{i_1},[g_{i_2},[\dots,[g_{i_{k-1}},g_{i_{k}}]\dots]]]] = [g,f] \in \Gamma^{k+1}(G_1 \ast \cdots \ast G_n)$.
\end{proof}


{
Finally, in the following remark we discuss the implications that these representations might have for the monodromy for any flag complex $K$.



\begin{center}
***
\end{center}

\begin{rmk}Consider the Denham and Suciu fibration for flag complexes $K$ and finite discrete groups $G_1,\dots,G_n$, and consider the corresponding monodromy representation
\[
{\rho_{K}}:G_1\times \cdots \times G_n \longrightarrow  \Out(\pi_1(Z_K(\underline{EG},\underline{G}))).
\]
We are interested in a possible relation between $\rho_K$ and $$
{\rho_{K_0}}:G_1\times \cdots \times G_n \longrightarrow  \Out(F_N)
,$$
where $F_N$ is the kernel of the projection $G_1 \ast \cdots \ast G_n \twoheadrightarrow G_1 \times \cdots \times G_n.$ 
We are lead to believe that solving $\rho_{K_0}$ will help solve the other representations $\rho_{K}$ because of the geometric description of monodromy. The action of the fundamental group of the base shifts loops of the fibre in a \textit{certain direction}. On the other hand adding higher dimensional faces to $K_0$ will kill loops in the fibre in a way that can be described precisely (e.g. adding an edge kills loops \textit{parallel} to each other etc.). Then the monodromy for the new $K$ can be described, at least geometrically, using the monodromy for $K_0$. As an illustration, figures \ref{fig:K_0} and \ref{fig:K filled} give the fibre of the Denham-Suciu fibration for the choice of $G_1=G_2=G_3=\Z/2\Z$ with $K_0$ and $K$, respectively, where $K_0$ has only three vertices and $K$ has an extra edge. Here $\rho_{K_0}$ is supposed to determine $\rho_{K}$ since the loops in the shaded faces are killed.

\begin{figure}[ht!]
\centering
\begin{minipage}{0.5\linewidth}
\centering
{
\begin{tikzpicture}[scale=.94]
\path[draw](0,0)--(3,0);
\path[draw](3,0)--(5,1);
\path[draw](5,1)--(2,1);
\path[draw](2,1)--(2,3);
\path[draw](2,3)--(5,3);
\path[draw](5,3)--(3,2);
\draw[solid](3,2)--(0,2);
\draw[solid](0,0)--(0,2);
\draw[solid](0,0)--(2,1);
\draw[solid](3,0)--(3,2);
\draw[solid](5,1)--(5,3);
\draw[solid](0,2)--(2,3);
\coordinate [label=below left:$0$] (1) at (0,0);
\coordinate [label=below:$1$] (5) at (3,0);
\coordinate [label=above left:$1$] (5) at (2,1);
\coordinate [label=left:$1$] (5) at (0,2);
\end{tikzpicture}
}
\caption{$G_i=\Z/2\Z$, $K_0$}\label{fig:K_0}
\end{minipage}%
\begin{minipage}{0.5\linewidth}
\centering
{
\begin{tikzpicture}[scale=.94]
\path[draw](0,0)--(3,0);
\path[draw](3,0)--(5,1);
\path[draw](5,1)--(2,1);
\path[draw](2,1)--(2,3);
\path[draw](2,3)--(5,3);
\path[draw](5,3)--(3,2);
\draw[solid](3,2)--(0,2);
\draw[solid](0,0)--(0,2);
\draw[solid](0,0)--(2,1);
\draw[solid](3,0)--(3,2);
\draw[solid](5,1)--(5,3);
\draw[solid](0,2)--(2,3);
\coordinate [label=below left:$0$] (1) at (0,0);
\coordinate [label=below:$1$] (5) at (3,0);
\coordinate [label=above left:$1$] (5) at (2,1);
\coordinate [label=left:$1$] (5) at (0,2);
\filldraw [draw=black,fill=gray,opacity=0.5]
(0,0)--(3,0)--(5,1)--(2,1)--cycle ;
\filldraw [draw=black,fill=gray,opacity=0.5]
(0,2)--(3,2)--(5,3)--(2,3)--cycle ;
\end{tikzpicture}
}
\caption{$G_i=\Z/2\Z$, $K$}\label{fig:K filled}
\end{minipage}
\end{figure}
A similar situation would occur in higher dimensions, where if the faces $\sigma_1,\dots,\sigma_s$ are added to $K_0$ to obtain $K$, the monodromy for $K$  would be extracted from the monodromy for $K_0$, by keeping track of the order in which the faces are added and which loops are killed. This is believed to work \textit{a priori} since monodromy shifts non-trivial loops in the direction where the loops are not killed.

One way to attack this problem algebraically is as follows: there is a commutative diagram of fibrations
\begin{equation}\label{diagram: monodromy K0 vs K}
\begin{tikzcd}
H(p) \arrow{r}\arrow{d}  & H(p) \arrow{r}\arrow{d}  & \ast \arrow{d}  \\
Z_{K_0}(\underline{EG},\underline{G}) \arrow{r}\arrow{d}[swap]{p}  & Z_{K_0}(\underline{BG}) \arrow{r}\arrow{d}  & \prod_{i=1}^n{BG_i} \arrow{d}\\
Z_K(\underline{EG},\underline{G}) \arrow{r} & Z_K(\underline{BG}) \arrow{r} & \prod_{i=1}^n {BG_i},
\end{tikzcd}
\end{equation}
where $H(p)$ is the homotopy fibre of the map $p$. The fibre $H(p)$ is connected, so it follows from the long exact sequence in homotopy that the map $p$ induces a surjection 
$$p_{\#}: \pi_1(Z_{K_0}(\underline{EG},\underline{G})) \to \pi_1(Z_{K}(\underline{EG},\underline{G}))$$
on the level of fundamental groups. Thus, the kernel of the projection map is a free group, say $F_q$.
From \cite[Theorem 1.1]{stafa.fund.gp} it follows that both the fibre $H(p)$ and $Z_{K}(\underline{EG},\underline{G})$ are Eilenberg--Mac Lane spaces. Assume $Z_{K}(\underline{EG},\underline{G})$ has fundamental group $\pi$.

Let $F_N$ be the kernel of the projection $G_1 \ast \cdots \ast G_n  \to \prod_{i=1}^n G_i$. Consider the commutative diagram of fibrations in (\ref{diagram: monodromy K0 vs K}). If $K$ is a flag complex, then there is a commutative diagram as follows

\begin{equation}
\begin{tikzcd}[row sep=scriptsize, column sep=0]
F_q \arrow{dd}\arrow{rr}{\cong} &  & F_q \arrow{rr}\arrow{dd} &  & 1 \arrow{dd}  & \\
 & & & & & \\
F_N \arrow{rr} \arrow{dd}[swap]{p_{\#}} \arrow{dr} &  & G_1 \ast \cdots \ast G_n \arrow{rr} \arrow{dd} \arrow{dr} &  & \prod G_i \arrow{dd} \arrow{dr}{\rho_{K_0}} & \\
	&  \Inn(F_N)  \arrow[crossing over]{dd} \arrow[crossing over]{rr}&  & \Aut(F_N) \arrow[crossing over]{rr}  &   & \Out(F_N) \arrow[dotted]{dd} \\
\pi \arrow{rr}  \arrow{dr} &   & \prod_{SK_1} G_i \arrow{rr}  \arrow{dr} &   & \prod G_i  \arrow{dr}{\rho_{K}} & \\
	& \Inn(\pi) \arrow[crossing over, leftarrow]{uu} \arrow{rr}  &   & \Aut(\pi) \arrow[crossing over, leftarrow, dotted]{uu} \arrow{rr}   & & \Out(\pi) .
\end{tikzcd}
\end{equation}
where the dotted homomorphisms are yet to be determined if they exist. The goal is to show that if there is a homomorphism $r: \Out(F_N) \to \Out(\pi)$ induced by $p_{\#}$, then there is a homomorphism $\rho_K :G_1 \times \cdots \times G_n \to \Out(\pi)$  such that the following diagram commutes
\begin{center}
\begin{tikzcd}
{\color{white}{1}} & 
\Out(F_N)  \arrow[dotted]{dd}{r} \\
G_1 \times \cdots \times G_n  \arrow{ur}{\rho_{K_0}}\arrow{dr}[swap]{\rho_{K}} & \\
& \Out(\pi) .
\end{tikzcd}
\end{center}
That means, $\rho_K = r \circ {\rho_{K_0}} $. Hence, we want to find such a map $r$. 
$\blacksquare$
\end{rmk}
}

\section*{Acknowledgments}
The author would like to thank Fred Cohen for his suggestions. 

The author is supported by DARPA grant number N66001-11-1-4132.

\end{document}